\documentclass[twocolumn,letterpaper]{IEEEAerospaceCLS}  

\usepackage[]{graphicx}    
\usepackage[colorlinks=true, urlcolor=blue, linkcolor=black, citecolor=black]{hyperref} 
\newcommand{\ignore}[1]{}  
\usepackage{amsmath}
\usepackage{amssymb}
\usepackage{bm}
\usepackage{mathtools}
\usepackage{xcolor}
\usepackage{cleveref}
\usepackage[ruled, lined, linesnumbered, commentsnumbered]{algorithm2e}
\usepackage{float}
\usepackage{enumitem}
\usepackage{xurl} 
\usepackage{hyperref} 

\newtheorem{theorem}{Theorem}[section]
\newtheorem{lemma}[theorem]{Lemma}

\newtheorem{assumption}{Assumption}
\newtheorem{remark}{Remark}[section]

\begin{document}
\title{Optimality-Informed Neural Networks for Lunar Landing Trajectory Optimization}

\author{%
Zhenbo Wang\\
Department of Mechanical and Aerospace Engineering\\
The University of Tennessee\\
1512 Middle Drive, Knoxville, TN 37996\\
zwang124@utk.edu
\thanks{\footnotesize 979-8-3195-1048-8/27/$\$31.00$ \copyright2027 IEEE}              
}

\maketitle

\thispagestyle{plain}
\pagestyle{plain}

\maketitle

\thispagestyle{plain}
\pagestyle{plain}

\begin{abstract}
This paper develops an Optimality-Informed Neural Network (OINN) approach for the energy-optimal, free-final-time powered descent of a lunar lander from any initial position, velocity, and mass within a bounded operating envelope to a fixed landing site with zero terminal velocity. Building on a recent framework that jointly embeds Pontryagin's minimum principle and the Hamilton-Jacobi-Bellman equation for general nonlinear optimal control, the proposed OINN approach specializes that idea to a lunar landing problem with free time of flight and fixed terminal state. Every boundary and transversality condition is hard-encoded into the network architecture by construction, the closed-form Pontryagin-optimal thrust magnitude and direction law is substituted directly rather than learned, and the remaining state, costate, and an auxiliary value-function output are trained against a physics-residual loss formed entirely from the necessary conditions of optimality, with no precomputed optimal trajectories required. A preliminary theoretical analysis is explored, establishing a stochastic-optimization stationarity guarantee for the offline training procedure, an explicit bound translating the achieved training residual into bounds on touchdown position, touchdown velocity, and flight-time error, and a fixed, input-independent onboard computational and memory cost suitable for real-time deployment. Numerical simulations evaluate the trained policy, with no retraining, against an independently solved indirect-method boundary-value problem at six representative initial states spanning the operating envelope and against eighty additional Monte Carlo simulation runs, demonstrating close agreement with the indirect-method solution and consistently small dynamics and transversality residuals throughout the envelope.
\end{abstract}

\tableofcontents

\section{Introduction}
\label{sec:introduction}

The Apollo program first demonstrated that a spacecraft can be guided through a propulsive descent from lunar orbit to a safe touchdown using onboard computation, and the past several years have seen a renewed, broadened effort to repeat and extend that capability. NASA's Artemis campaign has carried a crew beyond low Earth orbit for the first time since Apollo and is developing crewed lunar landers, while NASA's Commercial Lunar Payload Services initiative and international programs such as China's Chang'e series and India's Chandrayaan series have flown, or are planning, a steadily increasing cadence of robotic landings \cite{wang2025a}. Several recent commercial landers have also tipped over or landed with reduced margin during the final powered-descent phase, underscoring that translating a precomputed reference trajectory into a propellant-efficient, accurate, and robust touchdown remains an open engineering challenge rather than a solved problem \cite{wang2025b}. Because the powered-descent phase is short, safety-critical, and necessarily executed with no opportunity for ground intervention, the guidance law that generates thrust and pointing commands in real time, from whatever position, velocity, and mass the vehicle actually has at the start of descent rather than from one trajectory planned in advance, is as consequential to mission success as the trajectory-design process that informs it. Trajectory optimization, in the sense of finding a propellant- or time-optimal descent profile subject to the vehicle's dynamics and constraints, is therefore not merely a planning convenience but a central engineering requirement for any lander that must reach a prescribed site with a prescribed terminal velocity while using as little of its onboard propellant as possible \cite{liu2026survey}.

The theoretical foundation for solving such problems dates to Pontryagin's minimum principle, which characterizes an optimal trajectory through a two-point boundary-value problem in the system's states and costates. Indirect methods solve this boundary-value problem directly, typically by a shooting or collocation procedure, and can in principle recover a globally optimal solution to high numerical precision. In practice, however, they are sensitive to the initial guess supplied for the unknown costates, whose physical units and magnitudes generally differ greatly from those of the states, and convergence for a new initial condition is not guaranteed, so each new powered-descent scenario can require a fresh, potentially fragile numerical solve~\cite{lu2018propellant}. Direct methods avoid this sensitivity by discretizing the trajectory and the control history and transcribing the optimal control problem into a finite-dimensional nonlinear program, solved by general-purpose nonlinear or pseudospectral optimization software~\cite{ross2012review}. This transcription is more robust to a poor initial guess, but the resulting nonlinear program can still require a variable, a priori unknown number of iterations to converge, and is correspondingly difficult to certify for execution within a fixed onboard computation budget. Reformulating the powered-descent problem as a convex program, most notably through the lossless-convexification result that recasts the nonconvex thrust-bound and direction constraints exactly as convex ones for a class of powered-descent problems, removed much of this uncertainty by guaranteeing convergence to the optimum in a bounded number of iterations of an interior-point solver. Convex-optimization-based guidance of this kind has since flown, or been proposed for flight, on several planetary and lunar landing missions~\cite{acikmese2007convex,blackmore2010minimum}. Even so, an exact convex reformulation is only available for problems with a specific, favorable structure. More general nonlinear dynamics, path constraints, or cost functions typically require successive convexification or other iterative approximations that reintroduce the variable iteration counts and convergence sensitivity that lossless convexification was designed to avoid~\cite{wang2024survey}.

An alternative line of work replaces the online numerical solve entirely with an offline-trained neural network that maps the current state directly to a control command, so that only a single, fast forward pass is required onboard. Supervised, imitation-learning approaches train such a network on a large dataset of optimal trajectories precomputed offline by an indirect or direct method, and have demonstrated real-time, closed-loop guidance for planetary landing and other guidance problems~\cite{sanchez2018real,cheng2019real}. Reinforcement learning instead trains a policy through repeated simulated interaction with the vehicle's dynamics, optimizing a reward signal rather than imitating precomputed examples, and has been applied directly to fuel-optimal lunar and planetary powered descent~\cite{scorsoglio2022image,scorsoglio2025meta}. Both families of methods share two limitations that motivate the approach developed in this paper.
First, both are data- or interaction-hungry. Imitation learning requires a training dataset whose generation cost and coverage scale with the dimension and extent of the operating envelope to be served, while reinforcement learning requires extensive trial-and-error exploration that can itself violate safety-critical state or control constraints before the policy has converged.
Second, neither approach enforces the governing dynamics or the necessary conditions of optimality directly. The underlying physical laws of optimal control enter only implicitly, through the training examples or the reward signal, so the deployed network offers no formal guarantee of dynamic feasibility, constraint satisfaction, or proximity to optimality once it is queried at a state not well represented in training.

Physics-informed neural networks (PINN), which embed the residual of a governing differential equation directly into the training loss alongside or instead of labeled data, offer a third alternative that trains the network against the physical laws of optimal control themselves rather than against either precomputed examples or a reward signal~\cite{raissi2019physics}. Several recent studies have begun applying this idea to aerospace guidance and control by embedding either Pontryagin's minimum principle or the Hamilton-Jacobi-Bellman equation into a physics-informed network~\cite{schiassi2022physics,d2025physics,na2024physics}, with encouraging improvements in physical consistency over purely data-driven networks. However, these studies typically embed only one of the two necessary-condition perspectives rather than their full joint structure and are often tailored to a single, fixed initial condition rather than a genuine closed-loop policy valid over a whole family of initial states. Also, because the Hamilton-Jacobi-Bellman equation can admit multiple solutions for a general nonlinear problem, existing research still relies on at least some quantity of precomputed optimal data to anchor the network to the correct one~\cite{misztela2018nonuniqueness}.

This paper builds on a recently proposed optimality-principles-informed neural network framework that addresses the above limitations for general nonlinear, infinite-horizon optimal regulation problems by unifying the full set of necessary conditions implied jointly by Pontryagin's minimum principle and the Hamilton-Jacobi-Bellman equation within a single architecture, achieving high sample efficiency, including training with little or even zero precomputed optimal data~\cite{wang2026opinn}. The present paper specializes and extends that framework, here renamed the Optimality-Informed Neural Network (OINN) approach, to the free-final-time, fixed-terminal-boundary structure of energy-optimal lunar powered descent, a structure not covered by the original infinite-horizon regulation formulation. The architecture hard-encodes the lunar lander's specific boundary and transversality conditions, including a free final time bounded only from below, exploits the closed-form constant and affine costate structure that follows from this particular Hamiltonian to remove most of the network's representational burden, substitutes the resulting Pontryagin-optimal thrust and direction law in closed form rather than learning it, and trains only against the necessary conditions of optimality, with the value function retained solely as an auxiliary Bellman-consistency diagnostic rather than a full Hamilton-Jacobi-Bellman embedding. The resulting policy is trained entirely offline, requires only a fixed, small number of floating-point operations to evaluate online, and is accompanied by an explicit theoretical analysis that translates training convergence and the achieved residual magnitude into computable bounds on touchdown error, flight-time error, and onboard computational cost.

The remainder of this paper is organized as follows. Section~\ref{sec:problem_formulation} formulates the free-final-time, energy-optimal lunar powered-descent problem over a bounded operating envelope of initial states. Section~\ref{sec:optimality_conditions} develops the necessary conditions of optimality for this problem via Pontryagin's minimum principle, including the closed-form optimal control law and the transversality conditions for the free terminal mass and free final time. Section~\ref{sec:OINN} presents the OINN architecture and the physics-residual training loss that embeds these conditions. Section~\ref{sec:theoretical_analysis} establishes the theoretical convergence, accuracy, and onboard computational properties of the trained policy. Section~\ref{sec:numerical_simulations} reports numerical simulations of a representative lunar landing scenario, comparing the trained policy against an independently solved indirect-method solution and a Monte Carlo evaluation across the operating envelope. Section~\ref{sec:conclusions} concludes the paper.

\section{Problem Formulation}
\label{sec:problem_formulation}


This paper considers a lunar lander descending under rocket propulsion from an initial position, velocity, and mass to a prescribed landing site with a prescribed (typically zero) terminal velocity. The lander carries no aerodynamic surfaces and is controlled entirely by a single throttleable, gimbaled main engine, whose thrust magnitude and pointing direction constitute the control input.

Motion of the lander is described in a local-vertical, local-horizontal (LVLH) frame with East--North--Up (ENU) coordinates, with its origin fixed at the designated landing site. The lunar gravitational acceleration is treated as constant over the relatively short, low-altitude descent and is directed along the negative ``Up'' axis. Unlike a single offline trajectory computed for one fixed starting condition, the descent considered here is intended to start from any initial position, velocity, and mass within a bounded operational range, since the lander's actual state at the moment powered descent begins is not known precisely in advance.


Let $\bm{r} = [x,\,y,\,z]^{\top} \in \mathbb{R}^{3}$ denote the lander position, $\bm{v} = [v_{x},\,v_{y},\,v_{z}]^{\top} \in \mathbb{R}^{3}$ its velocity, and $m \in \mathbb{R}$ its instantaneous mass. The control is the thrust magnitude $T$ together with a unit vector $\hat{\bm{\imath}}_{\theta} \in \mathbb{R}^{3}$ providing the thrust direction. The equations of motion are \cite{cheng2019real}
\begin{align}
\dot{\bm{r}} &= \bm{v}, \label{eq:eom_r} \\
\dot{\bm{v}} &= \frac{T}{m}\,\hat{\bm{\imath}}_{\theta} + \bm{g}, \label{eq:eom_v} \\
\dot{m} &= -\,\frac{T}{I_{sp}\,g_{0}}, \label{eq:eom_m}
\end{align}
where $\bm{g} = [0,\,0,\,-g_{\mathrm{moon}}]^{\top}$ is the constant lunar gravitational acceleration vector, $I_{sp}$ is the engine specific impulse, and $g_{0}$ is the standard Earth sea-level gravitational acceleration. Equation~\eqref{eq:eom_m} is the usual rocket-equation mass-depletion law: propellant is consumed at a rate proportional to thrust.


Collecting the above into state and control vectors,
\begin{equation}
\bm{x} = \begin{bmatrix} x \\ y \\ z \\ v_{x} \\ v_{y} \\ v_{z} \\ m \end{bmatrix} \in \mathbb{R}^{7}, \qquad
\bm{u} = \begin{bmatrix} T \\ \hat{\bm{\imath}}_{\theta} \end{bmatrix} \in \mathbb{R}^{4},
\label{eq:state_control}
\end{equation}
the dynamics \eqref{eq:eom_r}--\eqref{eq:eom_m} can be written compactly as $\dot{\bm{x}} = \bm{f}(\bm{x},\bm{u})$. The state--control pair is nonlinear in two distinct ways that matter throughout this paper: the velocity dynamics contain the product of $T/m$, a ratio of two states/controls, with the direction vector, and the direction vector itself is constrained to the surface of the unit sphere, a nonconvex quadratic equality constraint.


Rather than designing the guidance law around a single, precisely known initial condition, this paper targets a feedback policy, where the lander's actual position, velocity, and mass at the moment powered descent begins are taken to be free to vary, within a bounded region of plausible initial states, rather than fixed to a particular condition. Define the region of free initial states
\begin{align}\label{eq:region_omega}
\Omega = \Big\{ (\bm{r}_{0},\bm{v}_{0},m_{0}) :\ &\bm{r}_{0} \in \big[\bm{r}_{0}^{\mathrm{lo}},\bm{r}_{0}^{\mathrm{hi}}\big], \nonumber \\
&\bm{v}_{0} \in \big[\bm{v}_{0}^{\mathrm{lo}},\bm{v}_{0}^{\mathrm{hi}}\big], \nonumber \\
&m_{0} \in \big[m_{0}^{\mathrm{lo}},m_{0}^{\mathrm{hi}}\big] \Big\},
\end{align}
where $\bm{r}_{0}^{\mathrm{lo}}, \bm{r}_{0}^{\mathrm{hi}} \in \mathbb{R}^{3}$ and $\bm{v}_{0}^{\mathrm{lo}}, \bm{v}_{0}^{\mathrm{hi}} \in \mathbb{R}^{3}$ denote component-wise lower and upper bounds on the initial position and velocity, and $m_{0}^{\mathrm{lo}}, m_{0}^{\mathrm{hi}} \in \mathbb{R}$ denote scalar lower and upper bounds on the initial mass. Section~\ref{sec:numerical_simulations} gives the specific bounds used in numerical simulations.

\begin{remark}
The vehicle dry mass $m_{\mathrm{dry}}$ is held fixed across $\Omega$ rather than scaled with $m_{0}$. This is a simplifying choice. A more detailed vehicle model might let the dry mass scale with vehicle size. As a consequence, the available propellant fraction $(m_{0}-m_{\mathrm{dry}})/m_{0}$ varies somewhat across $\Omega$ depending on where $m_{0}$ falls within its range.
\end{remark}


The trajectory begins at an initial state $(\bm{r}_{0},\bm{v}_{0},m_{0}) \in \Omega$ at the initial time $t_{0}$ and must reach a prescribed landing site at the final time $t_{f}$:
\begin{align}
\bm{r}(t_{0}) = \bm{r}_{0}, \quad \bm{v}(t_{0}) = \bm{v}_{0}, \quad m(t_{0}) = m_{0}, \label{eq:bc_initial} \\
\bm{r}(t_{f}) = \bm{r}_{f}, \quad \bm{v}(t_{f}) = \bm{v}_{f}, \label{eq:bc_terminal}
\end{align}
where $\bm{r}_{f}$ and $\bm{v}_{f}$ are fixed and identical for every $(\bm{r}_{0},\bm{v}_{0},m_{0}) \in \Omega$. In this paper, $\bm{r}_{f} = \bm{0}$ and $\bm{v}_{f} = \bm{0}$, corresponding to a soft landing exactly at the designated site. The terminal mass $m(t_{f})$ is not prescribed; it is free to be determined by the optimization, and less propellant used is preferable. This combination of free initial conditions together with a fixed terminal position and velocity but a free terminal mass is the central structural feature that the OINN architecture in Section~\ref{sec:OINN} is built to satisfy.


Beyond the boundary conditions, the trajectory must respect
\begin{align}
0 \leq T(t) \leq T_{\max}, \qquad \big\| \hat{\bm{\imath}}_{\theta}(t) \big\|_{2}^{2} = 1, \label{eq:constraints_T_dir} \\
m_{\mathrm{dry}} \leq m(t) \leq m_{0}, \label{eq:constraints_mass}
\end{align}
i.e., the thrust magnitude is bounded by the engine's maximum thrust $T_{\max}$, the thrust direction is constrained to be a unit vector, and the mass must remain between the dry mass $m_{\mathrm{dry}}$, comprising structure, engine, and payload, and the initial wet mass $m_{0}$.


A further departure from a conventional fixed-horizon formulation is that the time of flight is treated as a decision variable here rather than a parameter chosen in advance. The final time is left free,
\begin{equation}
t_{f} > t_{0},
\label{eq:tf_free}
\end{equation}
to be determined by the optimization for each initial state, rather than fixed to a single mission-wide value. Letting the final time be free is appealing at the trajectory-design stage, since it allows the optimization itself to discover the most propellant-efficient flight duration for each initial state, rather than committing to a flight duration chosen by other means. Section~\ref{subsec:predicted_tf} introduces a nominal minimum flight time, used purely as a numerical safeguard inside the neural-network architecture rather than as a constraint of the problem itself.


An energy-optimal landing is sought, with a cost functional that combines the usual energy-like thrust penalty with a constant time penalty $\rho \geq 0$:
\begin{equation}
J = \int_{t_{0}}^{t_{f}} \Big( T^{2}(t) + \rho \Big)\, dt.
\label{eq:objective}
\end{equation}
\begin{remark}
\label{rem:time_penalty}
The time penalty $\rho$ is necessary in \eqref{eq:objective} to admit a finite minimizer when the final time is free. For fixed boundary conditions, lengthening $t_{f}$ always permits a gentler deceleration profile, so $\int T^{2}\,dt$ alone decreases monotonically as $t_{f}$ grows without bound. Without a competing term, the problem will have no finite optimal $t_{f}$ at all. The infimum of $\int T^{2}\,dt$ over $t_{f} > t_{0}$ is approached only in the limit $t_{f} \to \infty$. The penalty $\rho$, having the same units as $T^{2}$, restores a genuine fuel-versus-duration trade-off with a well-defined finite optimum, and is exactly the type of term a real mission planner would weigh: additional flight time is not free even when it reduces propellant consumption.
\end{remark}


For any single fixed $(\bm{r}_{0},\bm{v}_{0},m_{0}) \in \Omega$, the complete three-dimensional, energy-optimal lunar landing trajectory optimization problem with a free final time is
\begin{equation}
\min_{\bm{u}(\cdot),\, t_{f}} \ J = \int_{t_{0}}^{t_{f}} \Big( T^{2}(t) + \rho \Big)\, dt
\label{eq:problem1}
\end{equation}
subject to the dynamics \eqref{eq:eom_r}--\eqref{eq:eom_m}, the boundary conditions \eqref{eq:bc_initial}--\eqref{eq:bc_terminal}, the control and mass constraints \eqref{eq:constraints_T_dir}--\eqref{eq:constraints_mass}, and the free-final-time condition \eqref{eq:tf_free}. We denote it as Problem~1.

Strictly speaking, because $(\bm{r}_{0},\bm{v}_{0},m_{0})$ is permitted to be any point of $\Omega$ rather than one fixed triple, Problem~1 is a family of optimal control problems, one for every $(\bm{r}_{0},\bm{v}_{0},m_{0}) \in \Omega$. A classical indirect or direct transcription method must be re-solved from scratch for each member of this family. The goal of this paper is instead a single guidance law, a function of the current time-to-go and the specific initial state being flown, that solves every member of the family at once:
\begin{align}
&\bm{u}^{\ast}\big(t;\,\bm{r}_{0},\bm{v}_{0},m_{0}\big), \quad t_{f}^{\ast}\big(\bm{r}_{0},\bm{v}_{0},m_{0}\big), \nonumber \\
&\hspace{2.3em} \forall\, (\bm{r}_{0},\bm{v}_{0},m_{0}) \in \Omega.
\label{eq:policy_goal}
\end{align}
Problem~1 is nonconvex for two reasons: the velocity dynamics~\eqref{eq:eom_v} contain a nonlinear $T \, \hat{\bm{\imath}}_{\theta}/m$ coupling term, and the direction constraint in~\eqref{eq:constraints_T_dir} is a nonconvex quadratic equality, the surface rather than the interior of the unit ball. Section~\ref{sec:optimality_conditions} develops the necessary conditions of optimality for Problem~1 via Pontryagin's minimum principle; Section~\ref{sec:OINN} then develops a neural-network architecture, trained by domain randomization over $\Omega$, that approximates the feedback law~\eqref{eq:policy_goal} directly.

\section{Optimality Conditions}
\label{sec:optimality_conditions}

This section develops the necessary conditions of optimality for Problem~1 posed in Section~\ref{sec:problem_formulation} via Pontryagin's minimum principle \cite{kirk2004optimal}. These conditions are used twice in the remainder of this paper: once to obtain a closed-form expression for the optimal control that is substituted directly, rather than learned, inside the neural-network architecture of Section~\ref{sec:OINN}, and once to define the costate-related quantities that the same architecture represents.

\subsection{Hamiltonian and Costate Dynamics}
\label{subsec:hamiltonian}

Introduce costate vectors $\bm{\lambda}_{r}, \bm{\lambda}_{v} \in \mathbb{R}^{3}$ and a scalar costate $\lambda_{m} \in \mathbb{R}$, associated with $\bm{r}$, $\bm{v}$, and $m$ respectively. The Hamiltonian for Problem~1 is
\begin{equation}
H = \bm{\lambda}_{r}^{\top} \bm{v} + \bm{\lambda}_{v}^{\top}\Big( \frac{T}{m}\,\hat{\bm{\imath}}_{\theta} + \bm{g} \Big) - \lambda_{m}\,\frac{T}{I_{sp}\,g_{0}} + T^{2} + \rho.
\label{eq:hamiltonian}
\end{equation}
Applying the costate equations $\dot{\bm{\lambda}} = -\,\partial H/\partial \bm{x}$ gives
\begin{align}
\dot{\bm{\lambda}}_{r} &= \bm{0}, \label{eq:costate_r} \\
\dot{\bm{\lambda}}_{v} &= -\,\bm{\lambda}_{r}, \label{eq:costate_v} \\
\dot{\lambda}_{m} &= \frac{T}{m^{2}}\, \bm{\lambda}_{v}^{\top} \hat{\bm{\imath}}_{\theta}. \label{eq:costate_m}
\end{align}
Two structural simplifications follow directly from~\eqref{eq:costate_r}--\eqref{eq:costate_m} and are exploited explicitly in the network architecture of Section~\ref{sec:OINN}: the position costate $\bm{\lambda}_{r}$ is exactly constant along the optimal trajectory, and the velocity costate $\bm{\lambda}_{v}$ is therefore an exactly affine (linear) function of time. Only the mass-costate equation~\eqref{eq:costate_m} is genuinely nonlinear, through its coupling to $T$, $m$, and $\hat{\bm{\imath}}_{\theta}$.

\subsection{Optimal Control Law}
\label{subsec:control_law}

Pontryagin's minimum principle states that the optimal control minimizes the Hamiltonian pointwise over the admissible control set. Minimizing~\eqref{eq:hamiltonian} over the unit-vector direction $\hat{\bm{\imath}}_{\theta}$, for $T \geq 0$, is a linear minimization over the unit sphere, whose solution points the thrust exactly opposite the velocity costate:
\begin{equation}
\hat{\bm{\imath}}_{\theta}^{\ast} = -\,\frac{\bm{\lambda}_{v}}{\big\| \bm{\lambda}_{v} \big\|_{2}}.
\label{eq:optimal_direction}
\end{equation}
Substituting~\eqref{eq:optimal_direction} back into~\eqref{eq:hamiltonian} leaves an unconstrained scalar minimization over $T$ of a convex quadratic, clipped to the admissible range $[0,T_{\max}]$. The result is
\begin{equation}
T^{\ast} = \mathrm{clip}\!\left( \frac{1}{2}\left( \frac{\big\| \bm{\lambda}_{v} \big\|_{2}}{m} + \frac{\lambda_{m}}{I_{sp}\,g_{0}} \right),\ 0,\ T_{\max} \right),
\label{eq:optimal_thrust}
\end{equation}
where $\mathrm{clip}(\cdot,0,T_{\max})$ denotes clamping to the interval $[0,T_{\max}]$. Substituting the optimal direction~\eqref{eq:optimal_direction} into the mass-costate dynamics~\eqref{eq:costate_m} also gives a convenient closed form for use along the optimal trajectory:
\begin{equation}
\dot{\lambda}_{m} = -\,\frac{T^{\ast}\, \big\| \bm{\lambda}_{v} \big\|_{2}}{m^{2}}.
\label{eq:lambdam_optimal}
\end{equation}
\begin{remark}
The time penalty $\rho$ appears additively in the Hamiltonian~\eqref{eq:hamiltonian} but does not depend on the control. Consequently, $\rho$ has no effect whatsoever on the minimizing control law~\eqref{eq:optimal_direction}--\eqref{eq:optimal_thrust}; it affects only the transversality condition that determines the optimal final time derived below.
\end{remark}

\subsection{Transversality Condition for the Free Terminal Mass}
\label{subsec:transversality_mass}

Because the terminal mass $m(t_{f})$ is free and no terminal cost is associated with it, the standard transversality condition requires the corresponding costate to vanish at the final time:
\begin{equation}
\lambda_{m}(t_{f}) = 0.
\label{eq:transversality_mass}
\end{equation}
No analogous condition is needed for $\bm{\lambda}_{r}$ or $\bm{\lambda}_{v}$: the transversality argument runs the opposite way for those, requiring a costate's terminal value to vanish only when the corresponding state's terminal value is free. Here $\bm{r}(t_{f})$ and $\bm{v}(t_{f})$ are fixed boundary conditions, so $\bm{\lambda}_{r}(t_{f})$ and $\bm{\lambda}_{v}(t_{f})$ are left unconstrained by transversality; instead, the fixed terminal position and velocity become two additional algebraic conditions that the solution must satisfy directly on the state trajectory.

\subsection{Transversality Condition for the Free Final Time}
\label{subsec:transversality_tf}

When the final time is free, with no explicit terminal cost depending on $t_{f}$ beyond the state constraints already imposed, the standard transversality condition is $H(t_{f}) = 0$. Because the dynamics~\eqref{eq:eom_r}--\eqref{eq:eom_m} and the integrand of~\eqref{eq:objective} are both autonomous, that is, neither depends explicitly on $t$, the Hamiltonian is constant along the optimal trajectory, $dH/dt = 0$, so
\begin{equation}
H(t) = 0, \qquad \forall\, t \in [t_{0},t_{f}],
\label{eq:H_identically_zero}
\end{equation}
not merely at the final time. Substituting the optimal control law~\eqref{eq:optimal_direction}--\eqref{eq:optimal_thrust} into the Hamiltonian~\eqref{eq:hamiltonian} and simplifying, using $\hat{\bm{\imath}}_{\theta}^{\ast\top}\bm{\lambda}_{v} = -\big\|\bm{\lambda}_{v}\big\|_{2}$ and $T^{\ast}$ from~\eqref{eq:optimal_thrust} in the unsaturated regime, gives the compact closed form
\begin{equation}
H^{\ast}(t) = \bm{\lambda}_{r}^{\top}\bm{v} + \bm{\lambda}_{v}^{\top}\bm{g} - \big(T^{\ast}\big)^{2} + \rho.
\label{eq:H_closed_form}
\end{equation}
Condition~\eqref{eq:H_identically_zero} together with~\eqref{eq:H_closed_form} is what determines the final time when it is free: for a given initial state, $t_{f}^{\ast}$ is the value of the final time for which a solution of the costate dynamics~\eqref{eq:costate_r}--\eqref{eq:costate_m}, the optimal control law~\eqref{eq:optimal_direction}--\eqref{eq:optimal_thrust}, and the boundary conditions~\eqref{eq:bc_initial}--\eqref{eq:bc_terminal} also satisfies~\eqref{eq:H_identically_zero}--\eqref{eq:H_closed_form}.

\begin{remark}
\label{rem:tf_interior}
Condition~\eqref{eq:H_identically_zero} is the only information available about $t_{f}^{\ast}$. Nothing in Problem~1 bounds the final time from above, and Remark~\ref{rem:time_penalty} already established that the time penalty $\rho$ is what gives this condition a finite root in the first place. The OINN developed in Section~\ref{sec:OINN} predicts a final time for each initial state directly from this condition, subject only to a nominal positivity safeguard, $t_{f}^{\ast} > t_{f}^{\min}$ for a small fixed $t_{f}^{\min} > 0$, imposed for numerical reasons inside the network architecture rather than as a feature of Problem~1 itself.
\end{remark}

\subsection{Two-Point Boundary-Value Problem Summary}
\label{subsec:tpbvp_summary}

Collecting the results above, the indirect-method solution of Problem~1, for any one fixed $(\bm{r}_{0},\bm{v}_{0},m_{0}) \in \Omega$, reduces to a two-point boundary-value problem in fourteen scalar states and costates, $\bm{x},\bm{\lambda} \in \mathbb{R}^{7}$ each, together with one unknown scalar parameter $t_{f}$, with the optimal control~\eqref{eq:optimal_direction}--\eqref{eq:optimal_thrust} substituted into the dynamics. The associated boundary and transversality conditions are the seven initial conditions~\eqref{eq:bc_initial}, the six fixed terminal conditions on $\bm{r}$ and $\bm{v}$ from~\eqref{eq:bc_terminal}, the transversality condition~\eqref{eq:transversality_mass} for the free terminal mass, and the transversality condition~\eqref{eq:H_identically_zero} for the free final time, fifteen scalar conditions in total, matching the fourteen unknown states and costates plus the one unknown parameter $t_{f}$, as required for a well-posed parametric boundary-value problem. This is the boundary-value problem solved independently as a numerical check on the OINN in Section~\ref{sec:numerical_simulations}. It is also the structure that the network architecture in Section~\ref{sec:OINN} is built to satisfy through its architecture and training loss, simultaneously for every $(\bm{r}_{0},\bm{v}_{0},m_{0}) \in \Omega$, rather than through explicit numerical shooting for one fixed instance.

\section{Optimality-Informed Neural Network Approach}
\label{sec:OINN}

Motivated by \cite{wang2026opinn}, this section develops the optimality-informed neural network (OINN) approach to approximating the feedback law~\eqref{eq:policy_goal} directly, rather than solving Problem~1 independently for each $(\bm{r}_{0},\bm{v}_{0},m_{0}) \in \Omega$. The defining feature of the OINN approach is that the optimality conditions derived in Section~\ref{sec:optimality_conditions}, rather than examples of optimal trajectories, supply the training signal. The network is trained to satisfy the Hamiltonian, the costate dynamics, and the transversality conditions everywhere in $\Omega$, with the closed-form control law~\eqref{eq:optimal_direction}--\eqref{eq:optimal_thrust} substituted directly.

\subsection{Network Architecture}
\label{subsec:architecture}

Figure~\ref{fig:architecture} shows the overall architecture. Two sub-networks share the task of representing the policy: a \emph{condition encoder} that sees only the initial state $(\bm{r}_{0},\bm{v}_{0},m_{0})$, never the time variable, and a \emph{main trunk} that sees the time variable together with the initial state. This split follows directly from the structural simplification identified in Section~\ref{subsec:hamiltonian}: because $\bm{\lambda}_{r}$ is exactly constant and $\bm{\lambda}_{v}$ is therefore an exactly affine function of time for any initial state, only the two constant parameters of that affine function depend on $(\bm{r}_{0},\bm{v}_{0},m_{0})$ while their functional form in $t$ never changes. It is therefore both correct and more efficient to compute those two parameters once per initial state outside of any per-time-step computation, rather than re-deriving them inside a network that also processes $t$.

\begin{figure}
\centering
\includegraphics[width=\columnwidth]{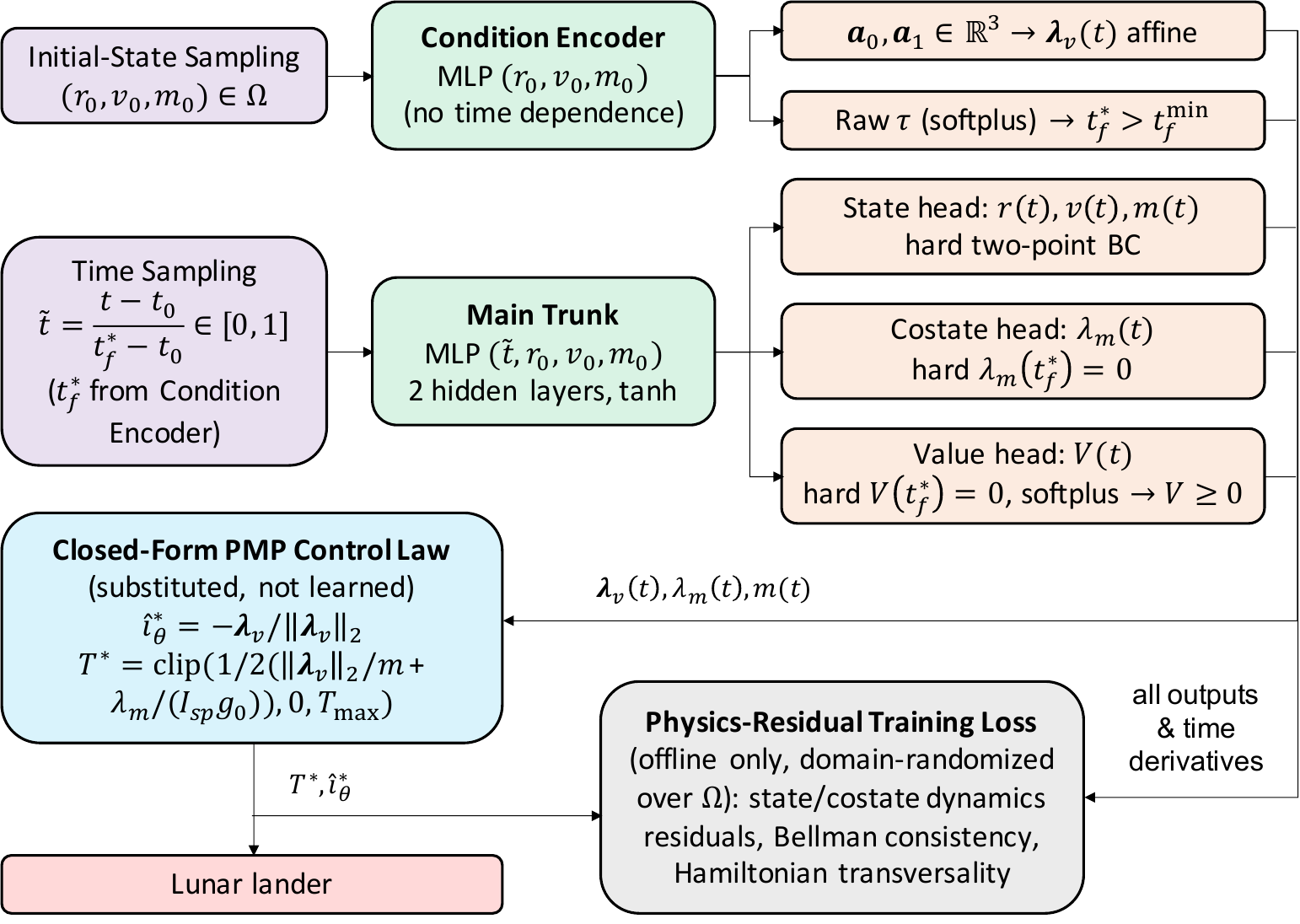}\\
\caption{OINN architecture. A condition encoder, seeing only the initial state, produces the affine velocity-costate parameters and the predicted final time; a main trunk, seeing time together with the initial state, produces the state, mass-costate, and value bubble corrections. All boundary and transversality conditions are satisfied exactly by construction for any initial state; the PMP-optimal control law is substituted in closed form rather than learned.}
\label{fig:architecture}
\end{figure}

The condition encoder's input is the normalized initial state alone. Writing $\bm{r}_{0,\mathrm{norm}}$, $\bm{v}_{0,\mathrm{norm}}$, and $m_{0,\mathrm{norm}}$ for the components of $(\bm{r}_{0},\bm{v}_{0},m_{0})$ rescaled by the center and half-width of their respective ranges in $\Omega$, so that the whole region maps to approximately $[-1,1]$ in every coordinate, the condition encoder computes
\begin{equation}
\big( \bm{a}_{0},\,\bm{a}_{1},\,\tau \big) = \mathrm{NN}_{\mathrm{cond}}\big( \bm{r}_{0,\mathrm{norm}},\,\bm{v}_{0,\mathrm{norm}},\,m_{0,\mathrm{norm}} \big),
\label{eq:cond_encoder}
\end{equation}
where $\bm{a}_{0},\bm{a}_{1} \in \mathbb{R}^{3}$ parameterize the affine velocity costate, defined in~\eqref{eq:bubble_lambda_v} below, and $\tau \in \mathbb{R}$ is a single scalar raw output used to construct the predicted final time. The main trunk's input additionally includes the normalized time $\tilde{t} = (t-t_{0})/(t_{f}^{\ast}-t_{0})$, so that $\tilde{t} \in [0,1]$ regardless of the value of $t_{f}^{\ast}$ for a given initial state:
\begin{equation}
\mathrm{trunk\ input} = \big( \tilde{t},\,\bm{r}_{0,\mathrm{norm}},\,\bm{v}_{0,\mathrm{norm}},\,m_{0,\mathrm{norm}} \big) \in \mathbb{R}^{8}.
\label{eq:trunk_input}
\end{equation}
In this paper, both sub-networks consist of two fully connected hidden layers with hyperbolic-tangent activations. The condition encoder uses $32$ units per hidden layer; the main trunk uses $64$ units per hidden layer, wider than the condition encoder since it must represent a whole family of trajectories rather than a single per-initial-state quantity. Hyperbolic-tangent activations are used throughout because they are smooth and infinitely differentiable, which keeps the network outputs, and the dynamics residuals built from them by automatic differentiation, smooth as well.

\subsection{Predicted Final Time}
\label{subsec:predicted_tf}

The raw scalar output $\tau$ from the condition encoder is mapped to a predicted final time via a softplus function,
\begin{equation}
t_{f}^{\ast} = t_{f}^{\min} + s_{tf}\,\ln\!\big(1+e^{\tau}\big),
\label{eq:tf_softplus}
\end{equation}
which guarantees $t_{f}^{\ast} > t_{f}^{\min}$ for any value of the network weights by construction, where $t_{f}^{\min} > 0$ is a small fixed nominal minimum flight time and $s_{tf}$ is a fixed scale constant given later in this section. This is the same hard-constraint-by-architecture philosophy used throughout this section for the state, costate, and value outputs, applied here to bound an output from below rather than to pin it to a single value. No analogous upper bound is imposed: $t_{f}^{\ast}$ is free to take whatever value condition~\eqref{eq:H_identically_zero}--\eqref{eq:H_closed_form} selects for each initial state.

\subsection{Hard-Constraint Construction of State, Costate, and Value Outputs}
\label{subsec:hard_constraints}

Rather than penalizing boundary-condition violations softly in the training loss, which would only encourage but not guarantee their satisfaction, every boundary and transversality condition identified in Section~\ref{sec:optimality_conditions} is satisfied exactly, for any value of the network weights and for any initial state $(\bm{r}_{0},\bm{v}_{0},m_{0}) \in \Omega$, by algebraically combining the raw network output with a boundary interpolant and a bubble multiplier that vanishes at the appropriate endpoint or endpoints.

For position and velocity, both endpoints are fixed by~\eqref{eq:bc_initial}--\eqref{eq:bc_terminal}, so a bubble function vanishing at both $\tilde{t}=0$ and $\tilde{t}=1$ is used:
\begin{align}
\bm{r}(t) &= \bm{r}_{0} + \tilde{t}\,\big( \bm{r}_{f} - \bm{r}_{0} \big) + \tilde{t}(1-\tilde{t}) \cdot \mathrm{NN}_{r}(t) \cdot s_{r}, \label{eq:bubble_r} \\
\bm{v}(t) &= \bm{v}_{0} + \tilde{t}\,\big( \bm{v}_{f} - \bm{v}_{0} \big) + \tilde{t}(1-\tilde{t}) \cdot \mathrm{NN}_{v}(t) \cdot s_{v}, \label{eq:bubble_v}
\end{align}
where $\mathrm{NN}_{r}(t)$ and $\mathrm{NN}_{v}(t)$ are the raw outputs of the main trunk's state head, and $s_{r}, s_{v}$ are fixed scale constants given later in this section. By construction, $\bm{r}(t_{0}) = \bm{r}_{0}$, $\bm{r}(t_{f}) = \bm{r}_{f}$, and likewise for $\bm{v}$, exactly, regardless of the network weights and regardless of which $\bm{r}_{0},\bm{v}_{0}$ value is supplied. This is precisely what makes the construction valid for the whole region $\Omega$ rather than a single fixed point.

The mass is fixed only at the initial time, since its terminal value is free, so its bubble function vanishes only at $\tilde{t}=0$:
\begin{equation}
m(t) = m_{0} + \tilde{t} \cdot \mathrm{NN}_{m}(t) \cdot s_{m}, \quad \tilde{t} = (t-t_{0})/(t_{f}^{\ast}-t_{0}).
\label{eq:bubble_m}
\end{equation}
By construction, $m(t_{0}) = m_{0}$ exactly, for any $m_{0}$ value supplied. The costate and value constructions follow the same logic, each using whichever bubble function matches its own boundary or transversality condition from Section~\ref{sec:optimality_conditions}:
\begin{align}
\lambda_{m}(t) &= (1-\tilde{t}) \cdot \mathrm{NN}_{\lambda}(t) \cdot s_{\lambda}, \label{eq:bubble_lambda_m} \\
V(t) &= (1-\tilde{t}) \cdot \mathrm{softplus}\big( \mathrm{NN}_{V}(t) \big) \cdot s_{V}, \label{eq:bubble_V} \\
\bm{\lambda}_{v}(t) &= s_{v}^{\lambda}\,\big( \bm{a}_{0} - \bm{a}_{1}\,\tilde{t} \big). \label{eq:bubble_lambda_v}
\end{align}
Equation~\eqref{eq:bubble_lambda_m} hard-encodes the transversality condition~\eqref{eq:transversality_mass}, $\lambda_{m}(t_{f}) = 0$, since its bubble factor $(1-\tilde{t})$ vanishes at $\tilde{t}=1$. Equation~\eqref{eq:bubble_V} hard-encodes both a value terminal condition $V(t_{f})=0$, using the same bubble factor, and a value-positivity condition $V(t) \geq 0$ simultaneously, because a softplus function is always non-negative and the bubble factor is also always non-negative on $[0,1]$. Their product cannot be negative regardless of the network weights, a strictly stronger guarantee than a soft penalty could offer. The quantity $V(t)$ is interpreted as the optimal cost-to-go, $V(t) = \int_{t}^{t_{f}} \big( T^{\ast 2}(\tau) + \rho \big)\,d\tau$, and is used only as an auxiliary consistency check during training, which will be described later in this section. Equation~\eqref{eq:bubble_lambda_v} is the affine velocity-costate construction, with $\bm{a}_{0},\bm{a}_{1}$ supplied by the condition encoder~\eqref{eq:cond_encoder} and $s_{v}^{\lambda}$ a fixed scale constant.

\subsection{Closed-Form Treatment of the Position Costate}
\label{subsec:lambda_r}

As anticipated, the construction~\eqref{eq:bubble_lambda_v} reflects the fact that $\bm{\lambda}_{v}$ requires no per-time-step network evaluation. Only its two affine parameters need to be computed once per initial state by the condition encoder. The position costate $\bm{\lambda}_{r}$ itself never needs to appear explicitly inside the main trunk, since it does not enter the dynamics~\eqref{eq:eom_r}--\eqref{eq:eom_m} or the control law~\eqref{eq:optimal_direction}--\eqref{eq:optimal_thrust} directly. It matters only through its role as the negative time derivative of $\bm{\lambda}_{v}$. Because the network's fundamental time variable is the normalized time $\tilde{t}$ rather than physical time $t$, recovering $\bm{\lambda}_{r}$ requires one application of the chain rule through the predicted final time:
\begin{equation}
\bm{\lambda}_{r} = -\,\frac{d\bm{\lambda}_{v}}{dt} = -\,\frac{1}{t_{f}^{\ast}}\,\frac{d\bm{\lambda}_{v}}{d\tilde{t}} = \frac{s_{v}^{\lambda}\,\bm{a}_{1}}{t_{f}^{\ast}},
\label{eq:lambda_r_chain_rule}
\end{equation}
which matches the costate equation~\eqref{eq:costate_v}, $\dot{\bm{\lambda}}_{v} = -\bm{\lambda}_{r}$, exactly, for any value of $t_{f}^{\ast}$. Equation~\eqref{eq:lambda_r_chain_rule} is the only place in the architecture where $\bm{\lambda}_{r}$ is explicitly recovered, and it is needed solely for the free-time transversality residual described next.

\subsection{Closed-Form Control Substitution}
\label{subsec:control_substitution}

Because the optimal control law~\eqref{eq:optimal_direction}--\eqref{eq:optimal_thrust} is substituted directly rather than learned, the network never produces a control output at all. The thrust magnitude and thrust direction are always computed from the instantaneous costates and mass via
\begin{align}
\hat{\bm{\imath}}_{\theta}^{\ast}(t) &= -\,\frac{\bm{\lambda}_{v}(t)}{\big\| \bm{\lambda}_{v}(t) \big\|_{2}}, \label{eq:control_substitution_dir} \\
T^{\ast}(t) &= \mathrm{clip}\!\left( \frac{1}{2}\left( \frac{\big\| \bm{\lambda}_{v}(t) \big\|_{2}}{m(t)} + \frac{\lambda_{m}(t)}{I_{sp}\,g_{0}} \right),\ 0,\ T_{\max} \right), \label{eq:control_substitution_T}
\end{align}
exactly mirroring the necessary conditions~\eqref{eq:optimal_direction}--\eqref{eq:optimal_thrust} derived in Section~\ref{sec:optimality_conditions}. This removes the control law entirely from the list of things the network has to learn, leaving only the state trajectory, the mass costate, and the value function to be represented, and guarantees that the thrust commanded by the network is always consistent with the necessary conditions of optimality, regardless of training quality elsewhere.

\subsection{Physics-Residual Training Loss}
\label{subsec:loss}

The training loss aggregates the residuals of every necessary condition from Section~\ref{sec:optimality_conditions} that is not already satisfied by construction, including the state canonical equations~\eqref{eq:eom_r}--\eqref{eq:eom_m}, the mass-costate canonical equation~\eqref{eq:costate_m}, a Bellman-consistency condition relating $V(t)$ to the running cost, and the free-final-time transversality condition~\eqref{eq:H_identically_zero}--\eqref{eq:H_closed_form}:
\begin{align}
\mathrm{res}_{r} &= \dot{\bm{r}} - \bm{v}, \label{eq:res_r} \\
\mathrm{res}_{v} &= \dot{\bm{v}} - \Big( \frac{T^{\ast}}{m}\,\hat{\bm{\imath}}_{\theta}^{\ast} + \bm{g} \Big), \label{eq:res_v} \\
\mathrm{res}_{m} &= \dot{m} + \frac{T^{\ast}}{I_{sp}\,g_{0}}, \label{eq:res_m} \\
\mathrm{res}_{\lambda} &= \dot{\lambda}_{m} + \frac{T^{\ast}\,\big\| \bm{\lambda}_{v} \big\|_{2}}{m^{2}}, \label{eq:res_lambda} \\
\mathrm{res}_{V} &= \dot{V} + \big(T^{\ast}\big)^{2} + \rho, \label{eq:res_V} \\
\mathrm{res}_{H} &= \bm{\lambda}_{r}^{\top}\bm{v} + \bm{\lambda}_{v}^{\top}\bm{g} - \big(T^{\ast}\big)^{2} + \rho, \label{eq:res_H}
\end{align}
where time derivatives are obtained by central finite differences in the normalized time $\tilde{t}$ and converted to physical-time derivatives via the chain rule $d(\cdot)/dt = \big[ d(\cdot)/d\tilde{t} \big] / t_{f}^{\ast}$, holding the initial state fixed across the finite-difference evaluations for a given training sample. Each residual is non-dimensionalized by a characteristic physical scale (given later in this section) before being combined into the total loss
\begin{align}
L = \ &w_{r}\,\overline{\mathrm{res}_{r}^{2}} + w_{v}\,\overline{\mathrm{res}_{v}^{2}} + w_{m}\,\overline{\mathrm{res}_{m}^{2}} \nonumber \\
&+ w_{\lambda}\,\overline{\mathrm{res}_{\lambda}^{2}} + w_{V}\,\overline{\mathrm{res}_{V}^{2}} + w_{H}\,\overline{\mathrm{res}_{H}^{2}},
\label{eq:total_loss}
\end{align}
where the overline denotes the mean of the squared, non-dimensionalized residual over a training batch, and $w_{r},w_{v},w_{m},w_{\lambda},w_{V},w_{H}$ are fixed weights.

\begin{remark}
Residual~\eqref{eq:res_H} is the training signal that supplies all information about the correct final time. Nothing else in the loss constrains $t_{f}^{\ast}$ directly. As discussed in Remark~\ref{rem:tf_interior}, this residual targets the transversality condition $H(t)=0$ for all $t \in [t_{0},t_{f}]$, the only necessary condition available since the final time is unconstrained above.
\end{remark}

\subsection{Adaptive Scale Constants}
\label{subsec:scales}

The scale constants $s_{r}, s_{v}, s_{m}, s_{\lambda}, s_{V}, s_{v}^{\lambda}, s_{tf}$ introduced in previous subsections and the non-dimensionalization constants used to form~\eqref{eq:total_loss} exist purely for numerical conditioning. The raw outputs of a hyperbolic-tangent-based network are naturally of order one, while the underlying physical quantities, a velocity costate or the total energy-like cost, can span many orders of magnitude. Every scale constant is computed once, from a short dimensional-analysis argument evaluated at the center of $\Omega$ and at a nominal characteristic flight time $s_{tf}$, rather than hand-tuned:
\begin{align}
s_{r} &= 0.4\,\max\big| \bm{r}_{0,\mathrm{center}} - \bm{r}_{f} \big|, \label{eq:scale_sr} \\
s_{v} &= 2.0\,\max\big| \bm{v}_{0,\mathrm{center}} - \bm{v}_{f} \big|, \label{eq:scale_sv} \\
s_{tf} &= 60 \ \mathrm{s}, \label{eq:scale_stf} \\
s_{m} &= 0.5\,\frac{T_{\max}}{I_{sp}\,g_{0}}\,s_{tf}, \label{eq:scale_sm} \\
s_{v}^{\lambda} &= 0.5\,m_{0,\mathrm{center}}\,T_{\max}, \label{eq:scale_slv} \\
s_{\lambda} &= 0.5\,T_{\max}\,I_{sp}\,g_{0}, \label{eq:scale_slm} \\
s_{V} &= 0.3\,T_{\max}^{2}\,s_{tf}. \label{eq:scale_sV}
\end{align}
As one example of the reasoning behind these formulas, the optimal thrust magnitude is typically a sizable fraction of $T_{\max}$ over some portion of the flight, so a representative operating thrust is taken to be about half of $T_{\max}$. Since the stationarity condition~\eqref{eq:optimal_thrust} states that, away from saturation, $T^{\ast}$ is one-half of $\big( \|\bm{\lambda}_{v}\|/m + \lambda_{m}/(I_{sp}g_{0}) \big)$, a representative magnitude for $\|\bm{\lambda}_{v}\|$ is obtained by assuming it alone accounts for that representative thrust, giving $\|\bm{\lambda}_{v}\| \sim m_{0,\mathrm{center}}\,T_{\max}$, which is the formula in~\eqref{eq:scale_slv}. The constant $s_{tf}$ is a nominal characteristic flight time, used only to give $s_{m}$ and $s_{V}$ a sensible physical magnitude and to initialize~\eqref{eq:tf_softplus} near a reasonable value; it is not a bound on $t_{f}^{\ast}$. The remaining formulas follow similar reasoning from the dynamics and the cost functional.

\subsection{Training Procedure}
\label{subsec:training}

Training proceeds by stochastic collocation combined with domain randomization over $\Omega$. At each training iteration, a batch of normalized collocation times is drawn by jittering a fixed uniform grid in $\tilde{t} \in [0,1]$, and an independent batch of initial states is drawn uniformly at random from $\Omega$:
\begin{equation}
\bm{r}_{0}^{(i)},\bm{v}_{0}^{(i)},m_{0}^{(i)} \ \sim\ \mathrm{Uniform}(\Omega), \qquad i = 1,\dots,N,
\label{eq:domain_randomization}
\end{equation}
with $N$ the batch size. The network is evaluated at each sampled $\tilde{t}^{(i)}$ together with $\tilde{t}^{(i)} \pm h$ for a small finite-difference step $h$. All these three evaluations share the same sampled initial state, so that the residuals~\eqref{eq:res_r}--\eqref{eq:res_H} can be formed for every sample in the batch. The total loss~\eqref{eq:total_loss} is then minimized by the Adam optimizer \cite{kingma2014adam}. Algorithm~\ref{alg:training} summarizes the procedure. Section~\ref{sec:numerical_simulations} gives the specific numerical values used for the region $\Omega$, the minimum flight time $t_{f}^{\min}$, the time penalty $\rho$, and the training hyperparameters.


\begin{algorithm}[!t]
\caption{OINN training by domain randomization over $\Omega$}
\label{alg:training}

\SetInd{0.5em}{1.2em}

\KwIn{Region $\Omega$, minimum flight time $t_f^{\min}$,
characteristic time scale $s_{tf}$, time penalty $\rho$,
batch size $N$, finite-difference step $h$,
learning rate $\eta$, number of iterations $K$
}

Initialize network weights $\theta$ for the condition encoder and main trunk\;

\For{$k=1$ \KwTo $K$}{
    Sample $N$ initial states
    $(\bm r_0^{(i)}, \bm v_0^{(i)}, m_0^{(i)})$
    uniformly from $\Omega$\;

    Sample $N$ jittered normalized collocation times
    $\tilde t^{(i)} \in [0,1]$\;

    \For{$i=1$ \KwTo $N$}{
        Evaluate the network at
        $\tilde t^{(i)}$,
        $\tilde t^{(i)}+h$, and
        $\tilde t^{(i)}-h$,
        using the same
        $(\bm r_0^{(i)}, \bm v_0^{(i)}, m_0^{(i)})$\;

        Form the residuals
        $\mathrm{res}_r$,
        $\mathrm{res}_v$,
        $\mathrm{res}_m$,
        $\mathrm{res}_\lambda$,
        $\mathrm{res}_V$, and
        $\mathrm{res}_H$
        via~\eqref{eq:res_r}--\eqref{eq:res_H}\;
    }

    Form the batch loss
    $L$ via~\eqref{eq:total_loss}\;

    Update $\theta$ using one Adam step with
    gradient $\nabla_\theta L$\;
}

\KwOut{Trained network weights $\theta$}
\end{algorithm}

\section{Theoretical Analysis}
\label{sec:theoretical_analysis}

This section examines the fundamental properties of the OINN approach that determine its suitability for onboard, real-time implementation. These properties include the convergence behavior of the offline training procedure of Section~\ref{subsec:training}, the accuracy with which a trained, frozen network satisfies the necessary conditions of Section~\ref{sec:optimality_conditions} once deployed, and the computational and memory cost of evaluating the trained network on a flight computer. Throughout, $\theta$ denotes the complete collection of trainable weights and biases of the condition encoder and main trunk introduced in Section~\ref{sec:OINN}, and $n_{\theta}$ denotes its dimension.

\subsection{Training Convergence and Deterministic Onboard Evaluation}
\label{subsec:convergence}

Algorithm~\ref{alg:training} minimizes, at every iteration $k$, an empirical estimate of the population physics-residual loss
\begin{align}
L_{\infty}(\theta) = \ &\mathbb{E}_{(\bm{r}_{0},\bm{v}_{0},m_{0}) \sim \mathrm{Uniform}(\Omega)}\, \mathbb{E}_{\tilde{t} \sim \mathrm{Uniform}(0,1)} \nonumber \\
&\Big[\, w_{r}\,\big\| \mathrm{res}_{r} \big\|_{2}^{2} + w_{v}\,\big\| \mathrm{res}_{v} \big\|_{2}^{2} + w_{m}\,\mathrm{res}_{m}^{2} \nonumber \\
&\ + w_{\lambda}\,\mathrm{res}_{\lambda}^{2} + w_{V}\,\mathrm{res}_{V}^{2} + w_{H}\,\mathrm{res}_{H}^{2} \,\Big],
\label{eq:population_loss}
\end{align}
formed from the same residuals~\eqref{eq:res_r}--\eqref{eq:res_H}. The quantity $L_{\infty}(\theta)$ is well defined whenever these residuals are square-integrable under the sampling distribution of~\eqref{eq:domain_randomization}, which holds automatically here because $\Omega$ is bounded, $\tilde{t}$ ranges over the bounded interval $[0,1]$, and every network in Section~\ref{sec:OINN} is a finite composition of affine maps and bounded, smooth activation functions. The empirical loss minimized in Algorithm~\ref{alg:training} at iteration $k$ is an unbiased, finite-variance Monte Carlo estimate of $L_{\infty}(\theta_{k})$, since the batch of initial states and collocation times drawn at every iteration is independent and identically distributed according to the same fixed sampling distribution used to define~\eqref{eq:population_loss}.

\begin{assumption}
\label{asmp:lipschitz}
The mass constraint~\eqref{eq:constraints_mass} holds throughout training and deployment, so that $m(t) \geq m_{\mathrm{dry}} > 0$ for every $t$ and every $(\bm{r}_{0},\bm{v}_{0},m_{0}) \in \Omega$.
\end{assumption}

\begin{assumption}
\label{asmp:gradient_noise}
The stochastic gradient $\nabla_{\theta} L^{(k)}_{N}(\theta_{k})$ formed from the batch sampled at iteration $k$ satisfies $\mathbb{E}\big[ \nabla_{\theta} L^{(k)}_{N}(\theta_{k}) \mid \theta_{k} \big] = \nabla_{\theta} L_{\infty}(\theta_{k})$ and its variance $\mathrm{Var}\big[ \nabla_{\theta} L^{(k)}_{N}(\theta_{k}) \big] \leq \sigma^{2}$ for a finite constant $\sigma^{2}$ independent of $k$.
\end{assumption}

Under Assumption~\ref{asmp:lipschitz}, every residual in~\eqref{eq:res_r}--\eqref{eq:res_H} is a continuously differentiable function of $\theta$ for every fixed sampled input, since it is built from finite compositions of the affine and hyperbolic-tangent layers of Section~\ref{sec:OINN}, the bounded division by $m(t) \geq m_{\mathrm{dry}}$, and the clipping operation in~\eqref{eq:optimal_thrust}, all of which are themselves Lipschitz. Consequently $L_{\infty}$ in~\eqref{eq:population_loss} is continuously differentiable with a gradient that is Lipschitz on any bounded subset of parameter space. Under this smoothness property together with Assumption~\ref{asmp:gradient_noise}, results of this type, established for adaptive-moment stochastic-gradient methods such as Adam under standard smoothness and bounded-variance assumptions, give the following stationarity guarantee \cite{defossez2020simple}.

\begin{theorem}
\label{thm:convergence}
Under Assumptions~\ref{asmp:lipschitz}--\ref{asmp:gradient_noise} and a learning rate $\eta$ satisfying the usual smallness condition relative to the Lipschitz constant of $\nabla_{\theta} L_{\infty}$, the iterates $\theta_{1},\dots,\theta_{K}$ produced by Algorithm~\ref{alg:training} satisfy
\begin{equation}
\min_{1 \leq k \leq K}\ \mathbb{E}\Big[ \big\| \nabla_{\theta} L_{\infty}(\theta_{k}) \big\|_{2}^{2} \Big] \ = \ O\!\left( \frac{1}{\sqrt{K}} \right) + O(\eta),
\label{eq:convergence_rate}
\end{equation}
i.e., the average squared gradient norm of the population residual loss along the training trajectory is driven to a neighborhood of zero whose radius is controlled jointly by the number of iterations $K$ and the learning rate $\eta$.
\end{theorem}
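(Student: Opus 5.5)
The proof reduces Theorem~\ref{thm:convergence} to a standard nonconvex stochastic-optimization guarantee for adaptive-moment methods. It has three stages: establish $L$-smoothness of $L_{\infty}$ on the region visited by the iterates, establish that the mini-batch gradient is an unbiased, bounded-variance oracle, and then apply the Adam analysis of \cite{defossez2020simple}, reading off \eqref{eq:convergence_rate}.

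\textbf{Smoothness.} Fix a sampled input $(\tilde{t},\bm{r}_{0},\bm{v}_{0},m_{0})$. Every residual \eqref{eq:res_r}--\eqref{eq:res_H} is built from the following ingredients:
\begin{itemize}
\item finite differences of network outputs at $\tilde{t}$ and $\tilde{t}\pm h$, which are smooth in $\theta$;
\item the softplus map \eqref{eq:tf_softplus}, together with division by $t_{f}^{\ast} > t_{f}^{\min} > 0$;
\item division by $m \geq m_{\mathrm{dry}}$, which is safe by Assumption~\ref{asmp:lipschitz};
\item the norm $\|\bm{\lambda}_{v}\|_{2}$;
\item the clip in \eqref{eq:control_substitution_T}.
\end{itemize}
On any bounded parameter set $\Theta$, each ingredient is Lipschitz with bounded derivative. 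Two ingredients need care.

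\begin{itemize}
\item The clip is only piecewise smooth, so $\nabla_{\theta}L_{\infty}$ is Lipschitz only after averaging. The kink set in $(\tilde{t},\bm{r}_{0},\bm{v}_{0},m_{0})$ has measure zero for generic $\theta$, and the integrand and its a.e.\ gradient are uniformly bounded on $\Theta\times\Omega\times[0,1]$. Dominated convergence then makes $L_{\infty}$ continuously differentiable. I will argue the gradient Lipschitz property from bounded a.e.\ second derivatives; alternatively, if that fails, I will replace the clip by a smooth surrogate and note that the surrogate's constants are uniform.
\item The norm $\|\bm{\lambda}_{v}\|$ and the direction \eqref{eq:control_substitution_dir} need $\|\bm{\lambda}_{v}\|$ bounded away from zero. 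I will add this as an implicit restriction of $\Theta$, or observe that $\|\bm{\lambda}_{v}\|$ enters the residuals only through Lipschitz combinations except in the unit direction.
\end{itemize}
This yields a smoothness constant $\ell_{\Theta}$ on $\Theta$.

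\textbf{Oracle.} Because batches in Algorithm~\ref{alg:training} are drawn i.i.d.\ from the fixed distribution defining \eqref{eq:population_loss}, differentiating under the expectation gives unbiasedness. Bounded variance then follows from the uniform bound on per-sample gradients over $\Theta$, which is exactly Assumption~\ref{asmp:gradient_noise}. Dominated convergence again justifies interchanging gradient and expectation.

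\textbf{Descent argument.} I will invoke the main theorem of \cite{defossez2020simple} for Adam with $\beta_{1}<\sqrt{\beta_{2}}$ and step size $\eta$ satisfying $\eta \lesssim 1/\ell_{\Theta}$. Sketching why: the $\ell_{\Theta}$-smooth descent lemma gives a per-step bound
\[
\mathbb{E}L_{\infty}(\theta_{k+1}) \le \mathbb{E}L_{\infty}(\theta_{k}) - c\,\eta\,\mathbb{E}\|\nabla L_{\infty}(\theta_{k})\|^{2} + C\,\eta^{2}\bigl(\ell_{\Theta}\,\sigma^{2} + \text{momentum bias}\bigr).
\]
The adaptive denominators are bounded above and below because the gradients are bounded. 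Summing over $k$, using $L_{\infty}\ge 0$ (so the telescoped sum is at most $L_{\infty}(\theta_{1})$), and dividing by $c\,\eta K$ gives
\[
\min_{k} \mathbb{E}\|\nabla L_{\infty}(\theta_{k})\|^{2} \le \frac{L_{\infty}(\theta_{1})}{c\,\eta K} + O(\eta).
\]
Taking $\eta$ of order $K^{-1/2}$ in the first term, while reporting the residual $O(\eta)$ neighborhood for a fixed $\eta$, yields the $O(1/\sqrt{K})+O(\eta)$ form of \eqref{eq:convergence_rate}.

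\textbf{Main obstacle.} The hard step is the global smoothness. The constants are only uniform on a bounded $\Theta$, and nothing forces the iterates to stay bounded. I would handle this by stating the theorem conditionally on the iterates remaining in a bounded set, for example via the bounded iterates that Adam's normalized steps give over a finite horizon $K\eta$. The clip and $\|\bm{\lambda}_{v}\|\to 0$ nonsmoothness must be addressed in the same way.
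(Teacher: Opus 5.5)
Your reduction is the paper's own route. The paper's whole justification is the paragraph before the theorem: it asserts that each residual is $C^{1}$ in $\theta$ because the affine and $\tanh$ layers, the division by $m \geq m_{\mathrm{dry}}$, and the clip are Lipschitz. It concludes that $\nabla_{\theta}L_{\infty}$ is Lipschitz on bounded sets, and cites \cite{defossez2020simple} together with Assumption~\ref{asmp:gradient_noise}.

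Your caveats are more accurate than that argument: Lipschitz does not imply $C^{1}$, and the clip and $\bm{\lambda}_{v}/\|\bm{\lambda}_{v}\|_{2}$ are genuinely nonsmooth. However, the patches you propose do not close.
\begin{itemize}
\item The finite-horizon argument only gives $\|\theta_{k}-\theta_{1}\|_{\infty} \leq C\eta K$, so $\Theta$ grows with $K$. With it grow $\ell_{\Theta}$ and the per-sample gradient bound (polynomially in the weight norms for these networks), while the admissible $\eta \lesssim 1/\ell_{\Theta}$ shrinks. The constants in \eqref{eq:convergence_rate} are then not uniform in $K$. Confinement to a fixed bounded set, or the global Lipschitz constant that the theorem's step-size clause presupposes, has to be assumed outright.
\item Bounded a.e.\ second derivatives do not make $\nabla_{\theta}L_{\infty}$ Lipschitz across the clip, because the per-sample gradient jumps where the clip argument crosses $0$ or $T_{\max}$. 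You need the probability that this argument lies within $\delta$ of a threshold to be $O(\delta)$ uniformly on $\Theta$. Otherwise you need a smoothed clip, which changes the algorithm.
\item For $\bm{\lambda}_{v} \to \bm{0}$, only the restriction $\|\bm{\lambda}_{v}\|_{2} \geq \delta > 0$ on $\Theta \times \Omega \times [0,1]$ works. The term $T^{\ast}\bm{\lambda}_{v}/\|\bm{\lambda}_{v}\|_{2}$ enters $\mathrm{res}_{v}$, is discontinuous at $\bm{\lambda}_{v} = \bm{0}$ whenever $T^{\ast} > 0$ there, and has $\theta$-derivatives of order $1/\|\bm{\lambda}_{v}\|_{2}$.
\end{itemize}

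The more substantive gap is in the descent step. Your sketch treats Adam as SGD with a frozen preconditioner and puts the Adam-specific error in the $\eta^{2}$ term. But the second-moment estimate at iteration $k$ contains the square of the current stochastic gradient $\nabla_{\theta}L^{(k)}_{N}(\theta_{k})$, so the normalized step is a biased estimate of the preconditioned gradient. That bias costs $O(\eta)$ per iteration, not $O(\eta^{2})$, and after dividing by $\eta K$ it leaves a term independent of both $\eta$ and $K$. In \cite{defossez2020simple} it appears as $4dR^{2}\sqrt{1-\beta_{2}}$, with $d = n_{\theta}$ and $R$ an almost-sure bound on the stochastic gradients. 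Their Adam bound therefore has the form $O(1/(\eta K)) + O(\eta) + O(\sqrt{1-\beta_{2}})$ plus lower-order terms. It also holds only under almost-surely bounded stochastic gradients, which is stronger than Assumption~\ref{asmp:gradient_noise}.

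Reading off \eqref{eq:convergence_rate} thus requires two things. First, $\beta_{2}$ must be tied to $K$, e.g.\ $1-\beta_{2} \sim 1/K$, at the price of $\ln K$ factors. Second, your uniform per-sample gradient bound must be kept as a standing hypothesis; the variance bound alone is not enough. The paper's argument skips exactly these points, so these gaps belong to the shared route rather than to your version. A correct statement of the theorem still has to carry them as hypotheses.
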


Equation~\eqref{eq:convergence_rate} is deliberately a stationarity guarantee rather than a global-optimality guarantee: $L_{\infty}$ is nonconvex in $\theta$, and no first-order method of this kind can certify convergence to a global minimizer of a nonconvex objective in general. What Theorem~\ref{thm:convergence} certifies is that, given enough iterations and a sufficiently small learning rate, training can be driven arbitrarily close to a stationary point of the population residual loss, at which the residuals~\eqref{eq:res_r}--\eqref{eq:res_H} are, on average over $\Omega$ and over $\tilde{t} \in [0,1]$, no longer reducible by an infinitesimal change in $\theta$. Section~\ref{sec:numerical_simulations} reports the realized empirical loss trajectory and the resulting residual magnitudes for the specific training run used in this paper.

\begin{remark}
\label{rem:offline_convergence}
Once training terminates, $\theta$ is frozen for the remainder of the mission, and the policy is evaluated online purely by the deterministic forward computations of Section~\ref{sec:OINN}. The closed-form control substitution~\eqref{eq:control_substitution_dir}--\eqref{eq:control_substitution_T} requires no automatic differentiation, root-finding, or iterative refinement at runtime. This is a key difference from the indirect method summarized in Section~\ref{subsec:tpbvp_summary}, whose Newton-type collocation solver
can fail to converge for a given initial state.
Theorem~\ref{thm:convergence} is therefore the only convergence question relevant to onboard use: it must hold once offline prior to flight. No analogous online convergence requirement exists for the deployed policy, whose evaluation terminates in the fixed, input-independent number of arithmetic operations quantified later in this section, for every $(\bm{r}_{0},\bm{v}_{0},m_{0}) \in \Omega$, regardless of how close $\theta$ is to a stationary point of $L_{\infty}$.
\end{remark}

\subsection{Accuracy of the Converged Policy}
\label{subsec:accuracy}

As discussed above, not every necessary condition of optimality in Section~\ref{sec:optimality_conditions} is left to be learned. Several are satisfied exactly for every value of $\theta$ by the hard-constraint constructions of Section~\ref{sec:OINN}.

\begin{lemma}
\label{lem:exactness}
For every $\theta \in \mathbb{R}^{n_{\theta}}$ and every $(\bm{r}_{0},\bm{v}_{0},m_{0}) \in \Omega$, the constructions~\eqref{eq:bubble_r}--\eqref{eq:bubble_lambda_v}, \eqref{eq:tf_softplus}, \eqref{eq:lambda_r_chain_rule}, and \eqref{eq:control_substitution_dir}--\eqref{eq:control_substitution_T} satisfy, exactly rather than approximately, 1) the initial conditions~\eqref{eq:bc_initial},
2) the terminal position and velocity conditions of~\eqref{eq:bc_terminal},
3) the mass-costate transversality condition~\eqref{eq:transversality_mass},
4) the value positivity condition $V(t) \geq 0$,
5) the costate relation $\dot{\bm{\lambda}}_{v} = -\bm{\lambda}_{r}$ of~\eqref{eq:costate_v},
6) the minimum-flight-time safeguard $t_{f}^{\ast} > t_{f}^{\min}$, and
7) the control constraints~\eqref{eq:constraints_T_dir}, away from the measure-zero event $\bm{\lambda}_{v}(t) = \bm{0}$.
\end{lemma}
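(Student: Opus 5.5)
The proof is a direct verification, item by item, that each listed condition follows algebraically from one of the constructions, whatever the raw network outputs are. The only facts about the networks we will use are that $\mathrm{NN}_{r},\mathrm{NN}_{v},\mathrm{NN}_{m},\mathrm{NN}_{\lambda},\mathrm{NN}_{V}$ and $(\bm{a}_{0},\bm{a}_{1},\tau)$ are finite real numbers for every $\theta$ and every input. This holds because they are compositions of affine maps and $\tanh$. In particular, nothing about their values enters beyond finiteness, and that is what makes every statement hold for all $\theta$.

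The first step is to fix the time map. By item 6, proved first, $t_{f}^{\ast}>t_{f}^{\min}>0$, so $t_{f}^{\ast}-t_{0}>0$ and $\tilde{t}=(t-t_{0})/(t_{f}^{\ast}-t_{0})$ is well defined. It sends $t_{0}\mapsto 0$ and $t_{f}^{\ast}\mapsto 1$. Item 6 itself follows from~\eqref{eq:tf_softplus}: $\ln(1+e^{\tau})>0$ for every finite $\tau$, and $s_{tf}>0$ by~\eqref{eq:scale_stf}.

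Next we handle the boundary and transversality items by evaluating the bubble factors at the endpoints.

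For items 1 and 2, set $\tilde{t}=0$ in~\eqref{eq:bubble_r}, \eqref{eq:bubble_v}, and~\eqref{eq:bubble_m}. This kills both $\tilde{t}$ and $\tilde{t}(1-\tilde{t})$, leaving $\bm{r}_{0},\bm{v}_{0},m_{0}$. Setting $\tilde{t}=1$ in~\eqref{eq:bubble_r}--\eqref{eq:bubble_v} leaves $\bm{r}_{0}+(\bm{r}_{f}-\bm{r}_{0})=\bm{r}_{f}$ and likewise $\bm{v}_{f}$.

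For item 3, the factor $(1-\tilde{t})$ in~\eqref{eq:bubble_lambda_m} vanishes at $\tilde{t}=1$, so $\lambda_{m}(t_{f}^{\ast})=0$.

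For item 4, we read it as a statement on the flight interval $\tilde{t}\in[0,1]$. There $(1-\tilde{t})\ge 0$, $\mathrm{softplus}\ge 0$, and $s_{V}>0$, so $V(t)\ge 0$ by~\eqref{eq:bubble_V}. I would state this interval restriction explicitly.

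For item 5, differentiate~\eqref{eq:bubble_lambda_v} using $d\tilde{t}/dt=1/(t_{f}^{\ast}-t_{0})$. This gives $\dot{\bm{\lambda}}_{v}=-s_{v}^{\lambda}\bm{a}_{1}/(t_{f}^{\ast}-t_{0})$, which is exactly $-\bm{\lambda}_{r}$ as defined in~\eqref{eq:lambda_r_chain_rule}, with $t_{0}=0$ as implicitly taken there. The identity is exact because $\bm{a}_{0},\bm{a}_{1},t_{f}^{\ast}$ depend only on the initial state, not on $t$. This is where the encoder/trunk split matters.

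Item 7 is the one step needing care. On the event $\bm{\lambda}_{v}(t)\neq\bm{0}$, the vector~\eqref{eq:control_substitution_dir} has unit norm by construction, and the clip in~\eqref{eq:control_substitution_T} places $T^{\ast}$ in $[0,T_{\max}]$. The clip's argument must be finite for this, and it is, since $m(t)\ge m_{\mathrm{dry}}>0$ by Assumption~\ref{asmp:lipschitz}. So $m(t)$ is bounded away from zero, and we note this dependence explicitly.

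The main obstacle is justifying the phrase ``measure-zero.'' For each fixed initial state, $\bm{\lambda}_{v}(\tilde{t})=s_{v}^{\lambda}(\bm{a}_{0}-\bm{a}_{1}\tilde{t})$ is affine in $\tilde{t}$. Its zero set is therefore empty, a single point, or all of $[0,1]$, the last only when $\bm{a}_{0}=\bm{a}_{1}=\bm{0}$. I would argue that, for fixed $\theta$, the set of times at which it vanishes has Lebesgue measure zero unless $\bm{a}_{0}=\bm{a}_{1}=\bm{0}$. In that degenerate case the lemma must simply exclude those $t$, so the claim should be phrased as holding wherever $\bm{\lambda}_{v}(t)\neq\bm{0}$.
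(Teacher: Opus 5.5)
Your proposal is correct and follows the same route as the paper's proof: evaluate the bubble factors at $\tilde{t}=0$ and $\tilde{t}=1$ for items 1--3, use positivity of the softplus for items 4 and 6, differentiate the affine $\bm{\lambda}_{v}$ through the chain rule for item 5, and use the normalization plus the clip for item 7. Your additional care does not change the approach but tightens points the paper's one-paragraph proof leaves implicit: restricting item 4 to $\tilde{t}\in[0,1]$ with $s_{V}>0$; flagging the implicit convention $t_{0}=0$ in \eqref{eq:lambda_r_chain_rule}, which your opening step (deducing $t_{f}^{\ast}-t_{0}>0$ from $t_{f}^{\ast}>0$) also tacitly uses, so state it up front; noting that item 7 needs $m(t)\neq 0$ via Assumption~\ref{asmp:lipschitz}, since the architecture does not bound $m$; and giving the affine zero-set argument behind the ``measure-zero'' qualifier.
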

\noindent\emph{Proof.} Each property follows by direct substitution. The bubble factors $\tilde{t}(1-\tilde{t})$, $\tilde{t}$, and $(1-\tilde{t})$ in~\eqref{eq:bubble_r}--\eqref{eq:bubble_lambda_m} vanish at $\tilde{t}=0$, $\tilde{t}=1$, or both, independently of the raw network outputs they multiply, giving items 1--3. The softplus function in~\eqref{eq:bubble_V} and~\eqref{eq:tf_softplus} is non-negative and strictly positive, respectively, for every real argument, giving item 4 and item 6. Differentiating~\eqref{eq:bubble_lambda_v} with respect to $\tilde{t}$ and applying the chain rule through $t_{f}^{\ast}$ reproduces~\eqref{eq:lambda_r_chain_rule} algebraically, giving item 5, for any $\bm{a}_{0},\bm{a}_{1}$ and any $t_{f}^{\ast}>0$. Finally, \eqref{eq:control_substitution_dir} is a unit vector by construction whenever $\bm{\lambda}_{v}(t) \neq \bm{0}$, and \eqref{eq:control_substitution_T} is explicitly clipped to $[0,T_{\max}]$, giving item 7. \hfill$\blacksquare$

Lemma~\ref{lem:exactness} accounts for the boundary, transversality, and control-admissibility conditions of Section~\ref{sec:optimality_conditions}. What remains, and is satisfied only approximately, governed by the size of the training residuals, are the state canonical equations~\eqref{eq:eom_r}--\eqref{eq:eom_m}, the mass-costate canonical equation~\eqref{eq:costate_m}, the Bellman-consistency condition~\eqref{eq:res_V}, and the free-final-time transversality condition~\eqref{eq:H_identically_zero}--\eqref{eq:H_closed_form}. The following result quantifies the consequence of a nonzero residual for the trajectory the lander would actually fly.

\begin{theorem}
\label{thm:gronwall}
Let $\bm{\varepsilon}_{x}(t) = \big[ \mathrm{res}_{r}(t);\, \mathrm{res}_{v}(t);\, \mathrm{res}_{m}(t) \big] \in \mathbb{R}^{7}$ denote the state-dynamics residual~\eqref{eq:res_r}--\eqref{eq:res_m} evaluated along the trained network's own output trajectory $\bm{x}_{\mathrm{NN}}(t)$ and its own commanded control history $\bm{u}^{\ast}_{\mathrm{NN}}(t)$, the latter obtained by substituting $\bm{x}_{\mathrm{NN}}(t)$'s internally generated costates and mass into~\eqref{eq:control_substitution_dir}--\eqref{eq:control_substitution_T}, and let $\varepsilon_{x}^{\infty} = \sup_{t \in [t_{0},t_{f}^{\ast}]} \big\| \bm{\varepsilon}_{x}(t) \big\|_{2}$. Let $\bm{x}_{\mathrm{true}}(t)$ denote the solution of $\dot{\bm{x}} = \bm{f}\big( \bm{x}, \bm{u}^{\ast}_{\mathrm{NN}}(t) \big)$, $\bm{x}(t_{0}) = (\bm{r}_{0},\bm{v}_{0},m_{0})$, i.e., the trajectory the lander would actually fly if commanded with exactly the same open-loop thrust history the network internally computes for this initial state. Under Assumption~\ref{asmp:lipschitz}, $\bm{f}(\cdot,\bm{u})$ admits the Lipschitz constant
\begin{equation}
L_{f} \ = \ 1 \ + \ \frac{T_{\max}}{m_{\mathrm{dry}}^{2}}
\label{eq:lipschitz_constant}
\end{equation}
in $\bm{x}$, uniformly over the admissible control range $0 \leq T \leq T_{\max}$, $\big\| \hat{\bm{\imath}}_{\theta} \big\|_{2}=1$, and for every $t \in [t_{0}, t_{f}^{\ast}]$,
\begin{equation}
\big\| \bm{x}_{\mathrm{NN}}(t) - \bm{x}_{\mathrm{true}}(t) \big\|_{2} \ \leq \ \varepsilon_{x}^{\infty}\, \frac{e^{L_{f}(t-t_{0})} - 1}{L_{f}}.
\label{eq:gronwall_bound}
\end{equation}
\end{theorem}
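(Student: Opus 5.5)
The proof splits into two independent pieces: first establish the uniform Lipschitz constant \eqref{eq:lipschitz_constant}, then combine it with the exact initial-condition matching of Lemma~\ref{lem:exactness} in a Gr\"onwall argument on the error $\bm{e}(t) = \bm{x}_{\mathrm{NN}}(t) - \bm{x}_{\mathrm{true}}(t)$.

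\emph{Lipschitz constant.} Fix an admissible control $(T,\hat{\bm{\imath}}_{\theta})$ with $0 \le T \le T_{\max}$ and $\|\hat{\bm{\imath}}_{\theta}\|_2 = 1$, and take two states $\bm{x}_1,\bm{x}_2$ with masses $m_1,m_2 \ge m_{\mathrm{dry}}$, as Assumption~\ref{asmp:lipschitz} allows. Write $\bm{f}(\bm{x}_1,\bm{u}) - \bm{f}(\bm{x}_2,\bm{u})$ block by block.
\begin{itemize}
\item The position block is $\bm{v}_1 - \bm{v}_2$, of norm at most $\|\bm{x}_1 - \bm{x}_2\|_2$.
\item In the velocity block, $\bm{g}$ cancels and what remains is $T\,(1/m_1 - 1/m_2)\,\hat{\bm{\imath}}_{\theta}$. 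Apply the mean value theorem to $1/m$ on the segment $[m_1,m_2] \subset [m_{\mathrm{dry}},\infty)$, where $|d(1/m)/dm| \le 1/m_{\mathrm{dry}}^2$. This gives a norm of at most $(T_{\max}/m_{\mathrm{dry}}^2)\,|m_1 - m_2|$.
\item The mass block $-T/(I_{sp}g_0)$ does not depend on $\bm{x}$, so its difference is zero.
\end{itemize}
The triangle inequality on the stacked vector then gives $\|\bm{f}(\bm{x}_1,\bm{u}) - \bm{f}(\bm{x}_2,\bm{u})\|_2 \le (1 + T_{\max}/m_{\mathrm{dry}}^2)\,\|\bm{x}_1 - \bm{x}_2\|_2$. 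This bound is uniform in $t$ and in the admissible control, which is exactly \eqref{eq:lipschitz_constant}. Item~7 of Lemma~\ref{lem:exactness} guarantees that the commanded $\bm{u}^{\ast}_{\mathrm{NN}}(t)$ is admissible, so the bound applies along the whole trajectory.

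\emph{Gr\"onwall step.} By definition of the residuals \eqref{eq:res_r}--\eqref{eq:res_m}, the network trajectory satisfies $\dot{\bm{x}}_{\mathrm{NN}} = \bm{f}(\bm{x}_{\mathrm{NN}},\bm{u}^{\ast}_{\mathrm{NN}}) + \bm{\varepsilon}_x$, while $\dot{\bm{x}}_{\mathrm{true}} = \bm{f}(\bm{x}_{\mathrm{true}},\bm{u}^{\ast}_{\mathrm{NN}})$ with the same control. Item~1 of Lemma~\ref{lem:exactness} gives $\bm{e}(t_0) = \bm{0}$. Subtracting the two equations, integrating, and applying the Lipschitz bound yields
\[
\|\bm{e}(t)\|_2 \le \int_{t_0}^{t} \big( L_f \|\bm{e}(s)\|_2 + \varepsilon_x^{\infty} \big)\,ds .
\]
The integral form of Gr\"onwall's inequality, or equivalently comparison with the scalar ODE $\dot{y} = L_f y + \varepsilon_x^{\infty}$, $y(t_0) = 0$, whose solution is $\varepsilon_x^{\infty}(e^{L_f(t-t_0)} - 1)/L_f$, then gives \eqref{eq:gronwall_bound}.

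\emph{Main obstacle.} The hard part is not the estimate but the hypotheses it needs, which I would make explicit.
\begin{itemize}
\item \textbf{Mass floor on both trajectories.} The mean value step requires $m \ge m_{\mathrm{dry}}$ on both trajectories, including $\bm{x}_{\mathrm{true}}$, which is not constructed by the network. I would invoke Assumption~\ref{asmp:lipschitz} for both trajectories. Alternatively, one can note that the true mass is monotone decreasing and argue on the maximal interval on which it stays above $m_{\mathrm{dry}}$.
\item \textbf{Continuous versus discrete derivatives.} The residual must be read with the exact time derivative of the smooth network output, not its finite-difference surrogate. The $\tanh$ layers make $\bm{x}_{\mathrm{NN}}$ continuously differentiable in $t$, via $\tilde{t}$ and the factor $1/t_f^{\ast}$. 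With this reading $\bm{\varepsilon}_x$ is continuous on $[t_0,t_f^{\ast}]$ and $\varepsilon_x^{\infty}$ is finite. I would state this interpretation explicitly.
\item \textbf{Measurability of the control.} The clipped control is only piecewise smooth, so I would treat the ODE in the Carath\'eodory sense. The integral-form Gr\"onwall argument above needs nothing more than measurability of $\bm{u}^{\ast}_{\mathrm{NN}}$.
\end{itemize}
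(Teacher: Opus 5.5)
Your proposal is correct and follows the paper's proof essentially step for step. It uses the same blockwise Lipschitz estimate resting on $|1/m-1/m'|\le |m-m'|/m_{\mathrm{dry}}^{2}$ for masses above $m_{\mathrm{dry}}$ (Assumption~\ref{asmp:lipschitz}, applied to both trajectories), the zero initial error from item~1 of Lemma~\ref{lem:exactness}, and a Gr\"onwall comparison with $\dot{y}=L_f y+\varepsilon_x^{\infty}$, $y(t_0)=0$. Your integral inequality $\|\bm{e}(t)\|_2\le\int_{t_0}^{t}\big(L_f\|\bm{e}(s)\|_2+\varepsilon_x^{\infty}\big)\,ds$ is a slightly cleaner step than the paper's comparison lemma on $\|\dot{\bm{z}}\|_2$, which tacitly uses $\tfrac{d}{dt}\|\bm{z}\|_2\le\|\dot{\bm{z}}\|_2$, and your remarks on exact derivatives and Carath\'eodory solutions make explicit what the paper leaves implicit.
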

\noindent\emph{Proof.} Write $\bm{z}(t) = \bm{x}_{\mathrm{NN}}(t) - \bm{x}_{\mathrm{true}}(t)$. By Lemma~\ref{lem:exactness}, $\bm{x}_{\mathrm{NN}}(t_{0}) = (\bm{r}_{0},\bm{v}_{0},m_{0}) = \bm{x}_{\mathrm{true}}(t_{0})$, so $\bm{z}(t_{0}) = \bm{0}$. Writing $\bm{x}=(\bm{r},\bm{v},m)$, the only $\bm{x}$-dependence of $\bm{f}$ for a fixed control history $\bm{u}(t) = \big(T(t), \hat{\bm{\imath}}_{\theta}(t)\big)$ enters through the velocity equation~\eqref{eq:eom_v} via $1/m$, so
\begin{align*}
&\big\| \bm{f}(\bm{x},\bm{u}(t)) - \bm{f}(\bm{x}',\bm{u}(t)) \big\|_{2} \\ \leq \ &\big\| \bm{v}-\bm{v}' \big\|_{2} + T(t)\,\big| 1/m - 1/m' \big| \\
\leq \ &\big\| \bm{v}-\bm{v}' \big\|_{2} + \frac{T_{\max}}{m_{\mathrm{dry}}^{2}}\,|m-m'| \\
\leq \ &\Big( 1+\frac{T_{\max}}{m_{\mathrm{dry}}^{2}} \Big) \big\| \bm{x}-\bm{x}' \big\|_{2} \ = \ L_{f}\,\big\| \bm{x}-\bm{x}' \big\|_{2},
\end{align*}
using $\big\| \hat{\bm{\imath}}_{\theta}(t) \big\|_{2}=1$, $0 \leq T(t) \leq T_{\max}$, $m,m' \geq m_{\mathrm{dry}}$, the mean-value bound $|1/m-1/m'| \leq |m-m'|/m_{\mathrm{dry}}^{2}$, and $\|\bm{v}-\bm{v}'\|_{2}, |m-m'| \leq \|\bm{x}-\bm{x}'\|_{2}$. By definition of $\bm{\varepsilon}_{x}$, $\dot{\bm{x}}_{\mathrm{NN}}(t) = \bm{f}\big(\bm{x}_{\mathrm{NN}}(t),\bm{u}^{\ast}_{\mathrm{NN}}(t)\big) + \bm{\varepsilon}_{x}(t)$, so $\dot{\bm{z}}(t) = \bm{f}\big(\bm{x}_{\mathrm{NN}}(t),\bm{u}^{\ast}_{\mathrm{NN}}(t)\big) - \bm{f}\big(\bm{x}_{\mathrm{true}}(t),\bm{u}^{\ast}_{\mathrm{NN}}(t)\big) + \bm{\varepsilon}_{x}(t)$, hence $\big\| \dot{\bm{z}}(t) \big\|_{2} \leq L_{f}\,\big\| \bm{z}(t) \big\|_{2} + \varepsilon_{x}^{\infty}$. The Gr\"{o}nwall--Bellman comparison lemma \cite{louartassi2012new} applied to this scalar differential inequality with $\bm{z}(t_{0})=\bm{0}$ gives~\eqref{eq:gronwall_bound}. \hfill$\blacksquare$

Two consequences of Theorem~\ref{thm:gronwall} are of direct onboard relevance. First, evaluating~\eqref{eq:gronwall_bound} at $t=t_{f}^{\ast}$ and using the exact terminal condition of Lemma~\ref{lem:exactness}, $\bm{x}_{\mathrm{NN}}(t_{f}^{\ast}) = \big(\bm{r}_{f},\bm{v}_{f}, m_{\mathrm{NN}}(t_{f}^{\ast})\big)$, bounds the touchdown position and velocity error the lander would actually incur, were it commanded with the network's own open-loop thrust history, purely in terms of the dynamics residual $\varepsilon_{x}^{\infty}$ and the flight duration, both quantities available from training and evaluation diagnostics without requiring any independently solved ground-truth trajectory. Second, because the mass bound $m_{\mathrm{dry}} \leq m(t) \leq m_{0}$ of~\eqref{eq:constraints_mass} is not architecturally hard-coded as in Lemma~\ref{lem:exactness}, the mass component of the same bound~\eqref{eq:gronwall_bound} directly bounds any violation of~\eqref{eq:constraints_mass} in terms of $\varepsilon_{x}^{\infty}$ as well.

\begin{assumption}
\label{asmp:nondegenerate}
For every $(\bm{r}_{0},\bm{v}_{0},m_{0}) \in \Omega$, the exact extremal's Hamiltonian, regarded as a function of an assumed flight time through~\eqref{eq:H_closed_form}, has a simple (transversal) zero crossing at the true optimal flight time $\bar{t}_{f}$, i.e., $\partial H^{\ast}/\partial \bar{t}_{f} \neq 0$ at the root.
\end{assumption}

\begin{remark}
\label{rem:tf_sensitivity}
Let $\varepsilon_{H}^{\infty} = \sup_{t} \big| \mathrm{res}_{H}(t) \big|$ denote the free-final-time residual~\eqref{eq:res_H} along the trained network's own trajectory, with $t_{f}^{\ast}$ already fixed by the condition encoder rather than solved for at runtime. Under Assumption~\ref{asmp:nondegenerate}, a first-order implicit-function argument gives
\begin{equation}
\big| t_{f}^{\ast} - \bar{t}_{f} \big| \ \lesssim \ \frac{\varepsilon_{H}^{\infty}}{\big| \partial H^{\ast}/\partial \bar{t}_{f} \big|},
\label{eq:tf_sensitivity}
\end{equation}
to leading order in $\varepsilon_{H}^{\infty}$, since~\eqref{eq:H_identically_zero} is (by Section~\ref{subsec:transversality_tf}) the unique condition determining $\bar{t}_{f}$ for a fixed initial state, and the residual $\varepsilon_{H}^{\infty}$ measures exactly the extent to which the trained network's prediction fails to satisfy it.
\end{remark}

\begin{remark}
\label{rem:value_decoupling}
The control law~\eqref{eq:control_substitution_dir}--\eqref{eq:control_substitution_T} does not depend on $V(t)$ at all. $V(t)$ enters only the auxiliary residual~\eqref{eq:res_V} and is otherwise unused. Consequently, however large that residual remains after training, it has no bearing whatsoever on Theorem~\ref{thm:gronwall} or on the commanded thrust and direction. The value network exists purely as a Bellman-consistency diagnostic, not as a component of the deployed control law.
\end{remark}

\subsection{Computational Cost and Memory Requirements for Onboard Deployment}
\label{subsec:onboard_cost}

The input dimensions and hidden-layer widths of the condition encoder and main trunk already given in Section~\ref{subsec:architecture}, together with the output dimensions implicit in~\eqref{eq:cond_encoder} and~\eqref{eq:bubble_r}--\eqref{eq:bubble_lambda_v}, fix the total number of trainable parameters and the arithmetic cost of one forward pass exactly, independently of $\theta$, of $\Omega$, and of the specific initial state being flown.

\begin{table}[H]
\centering
\caption{Parameter count and multiply-accumulate (MAC) operations per forward pass of the OINN architecture.}
\label{tab:cost}
\begin{tabular}{l c c c}
\hline
Sub-network & In. / hidden / out. & Param. & MACs \\
\hline
Cond. encoder & $7 \to 32 \to 32 \to 7$ & $1{,}543$ & $1{,}472$ \\
Main trunk        & $8 \to 64 \to 64 \to 9$ & $5{,}321$ & $5{,}184$ \\
\hline
Total             & ---                     & $6{,}864$ & $6{,}656$ \\
\hline
\end{tabular}
\end{table}

The main trunk's output dimension of $9$ in Table~\ref{tab:cost} collects the state head producing $\mathrm{NN}_{r}(t) \in \mathbb{R}^{3}$, $\mathrm{NN}_{v}(t) \in \mathbb{R}^{3}$, and $\mathrm{NN}_{m}(t) \in \mathbb{R}$ in~\eqref{eq:bubble_r}--\eqref{eq:bubble_m}, together with the scalar costate head $\mathrm{NN}_{\lambda}(t)$ of~\eqref{eq:bubble_lambda_m} and the scalar value head $\mathrm{NN}_{V}(t)$ of~\eqref{eq:bubble_V}. Counting one multiply and one add as two floating-point operations (FLOPs), and tallying the hyperbolic-tangent nonlinearities separately since they are ordinarily implemented as library or hardware intrinsics rather than elementary arithmetic, one full forward pass through both sub-networks costs
\begin{equation}
\begin{split}
2 \times 6{,}656 \ &= \ 13{,}312 \ \text{FLOPs}, \\
64+128 \ &= \ 192 \ \tanh \text{ evaluations},
\end{split}
\label{eq:flop_count}
\end{equation}
plus the negligible cost of the closed-form control substitution~\eqref{eq:control_substitution_dir}--\eqref{eq:control_substitution_T}, including a three-component vector norm, a normalization, and a clip, on the order of ten further FLOPs and a single square root.

The corresponding memory footprint is the storage of $n_{\theta}=6{,}864$ weights and biases: at single-precision floating point, $6{,}864 \times 4\ \text{bytes} \approx 26.8\ \text{KiB}$; at double precision, as used directly in the training implementation underlying this paper, $6{,}864 \times 8\ \text{bytes} \approx 53.6\ \text{KiB}$. Either figure is negligible relative to the program memory of essentially any flight computer or guidance microcontroller, and no intermediate activation requires storing more than $64$ scalars at a time, since that is the width of the largest hidden layer in either sub-network.

\begin{remark}
\label{rem:cycle_budget}
On a flight computer or microcontroller capable of $\kappa$ scalar floating-point operations per second, evaluating the policy once, including both sub-networks, costs approximately $13{,}312/\kappa$ seconds, plus the time for $192$ transcendental-function evaluations. For a conservative $\kappa = 10^{7}$ FLOPs/s, representative of a low-power embedded processor without dedicated floating-point hardware, this is on the order of $1$--$2$~ms; for a typical flight computer with $\kappa = 10^{9}$ FLOPs/s or more, it is on the order of tens of microseconds. Comparable OINN laws deployed onboard a small quadrotor elsewhere in the literature report total online computation times around $30$~ms at a $30$~Hz update rate~\cite{wang2026opinn}, attributable almost entirely to sensor processing, attitude control, and actuator mixing rather than to the policy evaluation itself, which by~\eqref{eq:flop_count} is several orders of magnitude cheaper than that budget.
\end{remark}

This fixed, input-independent computational profile is the practical counterpart of Remark~\ref{rem:offline_convergence}. In contrast to the indirect method of Section~\ref{subsec:tpbvp_summary}, whose per-instance cost scales with an a priori unknown number of Newton-type collocation iterations, each requiring the assembly and factorization of a Jacobian whose dimension grows with the number of mesh points, the OINN forward pass quantified in~\eqref{eq:flop_count} is the complete onboard cost of evaluating the guidance law, known exactly in advance and identical for every $(\bm{r}_{0},\bm{v}_{0},m_{0}) \in \Omega$. Training itself, by contrast, is performed entirely offline, prior to flight, using the batch size $N$ and iteration count $K$ given numerically in Section~\ref{sec:numerical_simulations}. Its cost plays no role whatsoever in the onboard computational budget analyzed above.

Together, Theorem~\ref{thm:convergence}, Theorem~\ref{thm:gronwall}, and the cost figures of this section give a complete picture of what is required to trust the OINN approach onboard. The offline training procedure need converge, in the stationarity sense of~\eqref{eq:convergence_rate}, only once before flight. The boundary, transversality, and control-admissibility conditions enumerated in Lemma~\ref{lem:exactness} hold exactly regardless of how well that training converges. The residual magnitude actually achieved translates, through~\eqref{eq:gronwall_bound} and~\eqref{eq:tf_sensitivity}, into explicit, computable bounds on touchdown error and flight-time error. Also, the per-update onboard computational and memory cost is fixed, small, and known exactly in advance. Section~\ref{sec:numerical_simulations} reports the realized values of $\varepsilon_{x}^{\infty}$, $\varepsilon_{H}^{\infty}$, and the resulting touchdown statistics for the lunar-landing case studies considered in this paper.

\section{Numerical Simulations}
\label{sec:numerical_simulations}

\subsection{Simulation Scenarios and Parameter Settings}
\label{subsec:settings}

The numerical simulations consider a representative lunar powered-descent scenario, where the lander begins its final descent at an altitude of several hundred meters and a horizontal range of order one hundred meters from the designated landing site, with an initial descent rate of several meters per second and an initial wet mass of order ten metric tons, that must reach the site, $\bm{r}_{f}=\bm{0}$, with zero terminal velocity, $\bm{v}_{f}=\bm{0}$, while minimizing the energy-like cost functional~\eqref{eq:objective}. The mission assumes a lunar gravitational acceleration $g_{\mathrm{moon}}=1.6229\ \mathrm{m/s^{2}}$, an engine specific impulse $I_{sp}=311.0\ \mathrm{s}$ and a maximum thrust $T_{\max}=44{,}000\ \mathrm{N}$, representative of a single throttleable main engine sized for a vehicle of this mass class, and a fixed dry mass $m_{\mathrm{dry}}=1{,}000\ \mathrm{kg}$. Rather than a single fixed starting condition, the trained policy is required to cover the entire operating envelope $\Omega$ of~\eqref{eq:region_omega}, a three-dimensional box in initial position, initial velocity, and initial mass, so that a single trained network must serve any powered-descent initiation state within this range without retraining. The key parameters are summarized in Table~\ref{tab:sim_parameters}.

\begin{table}
\centering
\caption{Key parameter settings used in the lunar-landing numerical simulations.}
\label{tab:sim_parameters}
\begin{tabular}{l c}
\hline
\multicolumn{2}{l}{\textbf{Vehicle and mission parameters}} \\
\hline
Lunar gravitational accel., $g_{\mathrm{moon}}$ & $1.6229\ \mathrm{m/s^{2}}$ \\
Engine specific impulse, $I_{sp}$ & $311.0\ \mathrm{s}$ \\
Standard gravitational accel., $g_{0}$ & $9.81\ \mathrm{m/s^{2}}$ \\
Dry mass, $m_{\mathrm{dry}}$ & $1{,}000.0\ \mathrm{kg}$ \\
Maximum thrust, $T_{\max}$ & $44{,}000.0\ \mathrm{N}$ \\
Terminal position, $\bm{r}_{f}$ & $(0,0,0)\ \mathrm{m}$ \\
Terminal velocity, $\bm{v}_{f}$ & $(0,0,0)\ \mathrm{m/s}$ \\
\hline
\multicolumn{2}{l}{\textbf{Operating envelope $\Omega$, component-wise range}} \\
\hline
Initial position $x_{0}$ & $[50, 150]\ \mathrm{m}$ \\
Initial position $y_{0}$ & $[150, 250]\ \mathrm{m}$ \\
Initial position $z_{0}$ & $[800, 1200]\ \mathrm{m}$ \\
Initial velocity $v_{x,0}$ & $[5, 15]\ \mathrm{m/s}$ \\
Initial velocity $v_{y,0}$ & $[5, 15]\ \mathrm{m/s}$ \\
Initial velocity $v_{z,0}$ & $[-8, -2]\ \mathrm{m/s}$ \\
Initial mass $m_{0}$ & $[9{,}000, 11{,}000]\ \mathrm{kg}$ \\
\hline
\multicolumn{2}{l}{\textbf{Free-final-time and cost parameters}} \\
\hline
Minimum flight time, $t_{f}^{\min}$ & $10.0\ \mathrm{s}$ \\
Characteristic flight-time scale, $s_{tf}$ & $60.0\ \mathrm{s}$ \\
Time penalty, $\rho$ & $1.0\times10^{8}\ \mathrm{N^{2}}$ \\
\hline
\multicolumn{2}{l}{\textbf{Network and training hyperparameters}} \\
\hline
Condition-encoder hidden width & $32$ \\
Main-trunk hidden width & $64$ \\
Batch size, $N$ & $256$ \\
Training iterations, $K$ & $25{,}000$ \\
Adam learning rate, $\eta$ & $2\times10^{-3}$ \\
Finite-difference step, $h$ & $2\times10^{-3}$ \\
Loss weights $w_{r}, w_{v}, w_{m}, w_{\lambda}$ & $1.0$ each \\
Loss weights $w_{V}, w_{H}$ & $0.01$ each \\
Monte Carlo draws & $80$ \\
\hline
\end{tabular}
\end{table}

The OINN framework of Section~\ref{sec:OINN} is implemented in MATLAB using the Deep Learning Toolbox's automatic-differentiation primitives, \texttt{dlarray}, \texttt{dlgradient}, and \texttt{dlfeval}, together with the \texttt{adamupdate} function for the Adam update rule of Algorithm~\ref{alg:training}. The condition encoder and main trunk of Figure~\ref{fig:architecture} are each implemented as an explicit sequence of affine layers and hyperbolic-tangent activations operating on \texttt{dlarray} objects, so that reverse-mode automatic differentiation through both sub-networks, and through the finite-difference time derivatives used to form the residuals~\eqref{eq:res_r}--\eqref{eq:res_H}, is handled by the toolbox directly rather than by a hand-coded backward pass. Before training begins, the implementation performs a brief self-check, comparing the closed-form optimal control law~\eqref{eq:optimal_direction}--\eqref{eq:optimal_thrust} against a brute-force numerical minimization of the Hamiltonian~\eqref{eq:hamiltonian} over a fine grid of candidate thrust magnitudes at a representative state, confirming both the correctness of the closed-form substitution and the fact that the time penalty $\rho$ leaves the minimizing control law unaffected (established in Remark~\ref{rem:time_penalty}).

Training itself follows Algorithm~\ref{alg:training} directly. At each iteration, a batch of $N$ initial states is drawn uniformly from $\Omega$ and an independent, jittered batch of normalized collocation times is drawn in $[0,1]$. The network is evaluated at each collocation time together with two finite-difference-shifted neighbors sharing the same sampled initial state. The residuals~\eqref{eq:res_r}--\eqref{eq:res_H} are formed and combined into the total loss~\eqref{eq:total_loss}, and one Adam step updates the network weights. On a standard laptop CPU, with no GPU acceleration, training for the $K=25{,}000$ iterations used in this paper completes in a few minutes, an entirely offline cost that, as discussed in Remark~\ref{rem:offline_convergence}, plays no role in the onboard computational budget of Section~\ref{subsec:onboard_cost}.

After training, the same frozen network is evaluated with no further parameter updates at six selected initial states and at a further eighty randomly drawn initial states for the Monte Carlo study (will be detailed later in this section). As an independent numerical check on these results, the same boundary-value problem summarized in Section~\ref{subsec:tpbvp_summary} is also solved directly, for each of the six selected initial states, by MATLAB's \texttt{bvp4c} solver \cite{shampine2000solving} applied to the costate-scaled, non-dimensionalized indirect-method formulation, using the trained network's own predicted trajectory as the initial guess supplied to the solver. This warm start, rather than a generic flat guess, is what makes the indirect-method shooting problem tractable here: the costates $\bm{\lambda}_{v}$, $\lambda_{m}$ are of a substantially different order of magnitude than the states themselves, so both the network-supplied warm start and an explicit non-dimensionalization of the costates by the scale constants $s_{v}^{\lambda}$, $s_{\lambda}$ of Section~\ref{subsec:scales} are needed for \texttt{bvp4c} to converge reliably.

Table~\ref{tab:sim_parameters} summarizes the vehicle, mission, operating-envelope, free-final-time, and network and training parameters used throughout this section.

\subsection{Comparison with the Indirect Method at Six Representative Initial States}
\label{subsec:six_points}

To assess whether the trained network actually recovers the optimal solution, rather than merely a function that satisfies the necessary conditions approximately well, the single trained policy is evaluated, with no retraining, at six representative initial states spanning the operating envelope $\Omega$: a center point (T1), the four corners of $\Omega$ formed by combining a light or heavy initial mass with a near, slow approach or a far, fast approach (T2 through T5), and one interior, off-center point (T6). Writing $\bm{r}_{0,\mathrm{center}}=(100,200,1000)\ \mathrm{m}$, $\bm{v}_{0,\mathrm{center}}=(10,10,-5)\ \mathrm{m/s}$, and $m_{0,\mathrm{center}}=10{,}000\ \mathrm{kg}$ for the center of $\Omega$ and the corresponding half-widths of Table~\ref{tab:sim_parameters}, the six points are

\begin{itemize}[leftmargin=*]
    \item \textbf{T1, center (baseline):} $\bm{r}_{0}=(100,200,1000)\ \mathrm{m}$, $\bm{v}_{0}=(10,10,-5)\ \mathrm{m/s}$, $m_{0}=10{,}000\ \mathrm{kg}$;
    \item \textbf{T2, light and near/slow:} $\bm{r}_{0}=(50,150,800)\ \mathrm{m}$, $\bm{v}_{0}=(5,5,-8)\ \mathrm{m/s}$, $m_{0}=9{,}000\ \mathrm{kg}$;
    \item \textbf{T3, heavy and near/slow:} $\bm{r}_{0}=(50,150,800)\ \mathrm{m}$, $\bm{v}_{0}=(5,5,-8)\ \mathrm{m/s}$, $m_{0}=11{,}000\ \mathrm{kg}$;
    \item \textbf{T4, light and far/fast:} $\bm{r}_{0}=(150,250,1200)\ \mathrm{m}$, $\bm{v}_{0}=(15,15,-2)\ \mathrm{m/s}$, $m_{0}=9{,}000\ \mathrm{kg}$;
    \item \textbf{T5, heavy and far/fast:} $\bm{r}_{0}=(150,250,1200)\ \mathrm{m}$, $\bm{v}_{0}=(15,15,-2)\ \mathrm{m/s}$, $m_{0}=11{,}000\ \mathrm{kg}$;
    \item \textbf{T6, an interior off-center point:} $\bm{r}_{0}=(115,175,1140)\ \mathrm{m}$, $\bm{v}_{0}=(8,13,-5.6)\ \mathrm{m/s}$, $m_{0}=10{,}400\ \mathrm{kg}$.
\end{itemize}

For each of these six points, Figures~\ref{fig:t1}--\ref{fig:t6} compare the trained network's prediction, evaluated zero-shot at that initial state, against the independent indirect-method solution of the same boundary-value problem obtained from \texttt{bvp4c}. Each figure shows the three position components, the three velocity components, the vehicle mass, and the commanded thrust magnitude over the flight, together with a summary panel reporting the network's predicted flight time and final mass, the corresponding \texttt{bvp4c} values, their differences $\Delta t_{f}$ and $\Delta m_{f}$, and the mesh size and maximum boundary-condition residual achieved by \texttt{bvp4c} at convergence.

\begin{figure*}
\centering
\includegraphics[width=2\columnwidth]{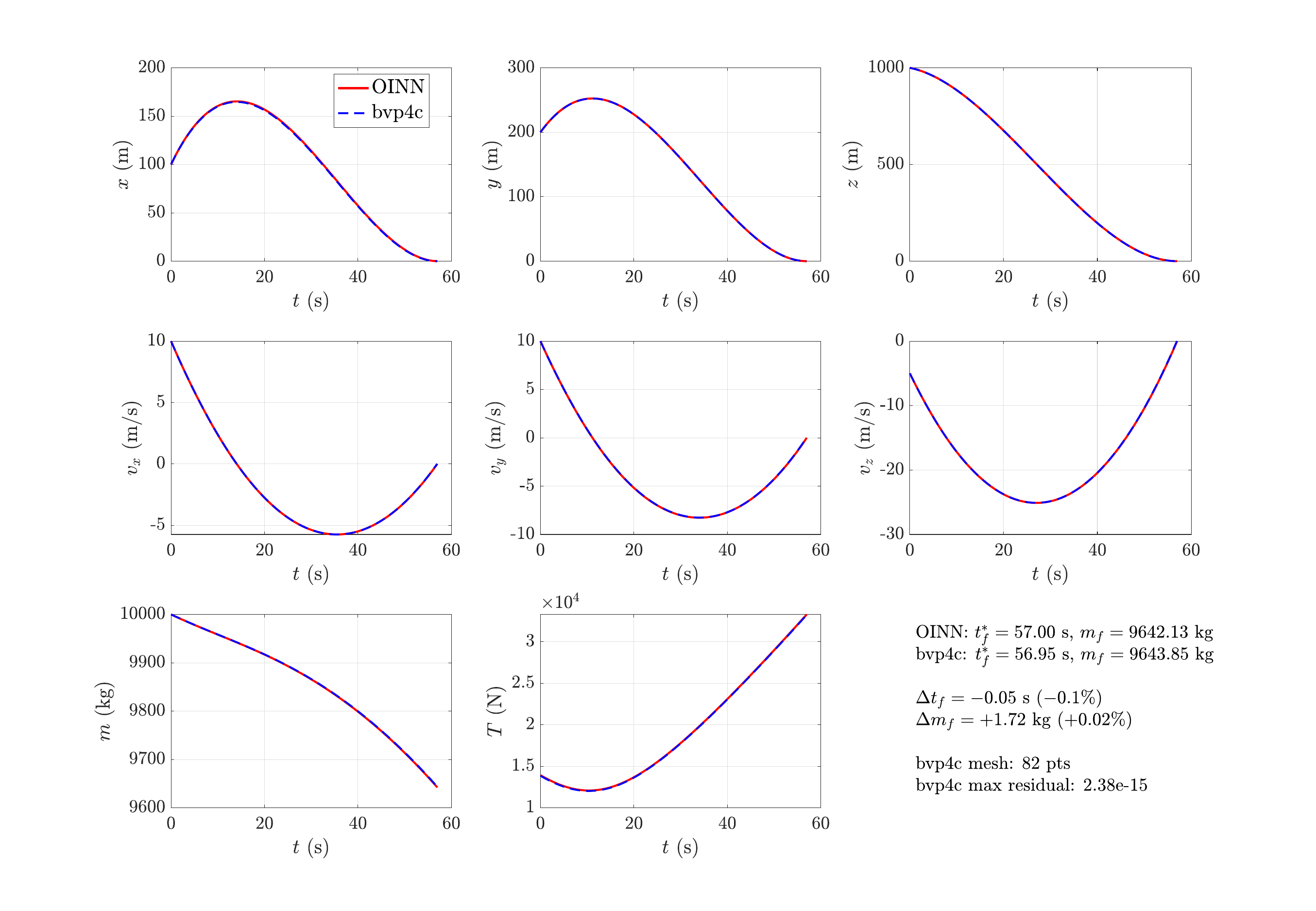}
\caption{OINN solution vs bvp4c indirect solution for T1 (baseline).}
\label{fig:t1}
\end{figure*}

\begin{figure*}
\centering
\includegraphics[width=2\columnwidth]{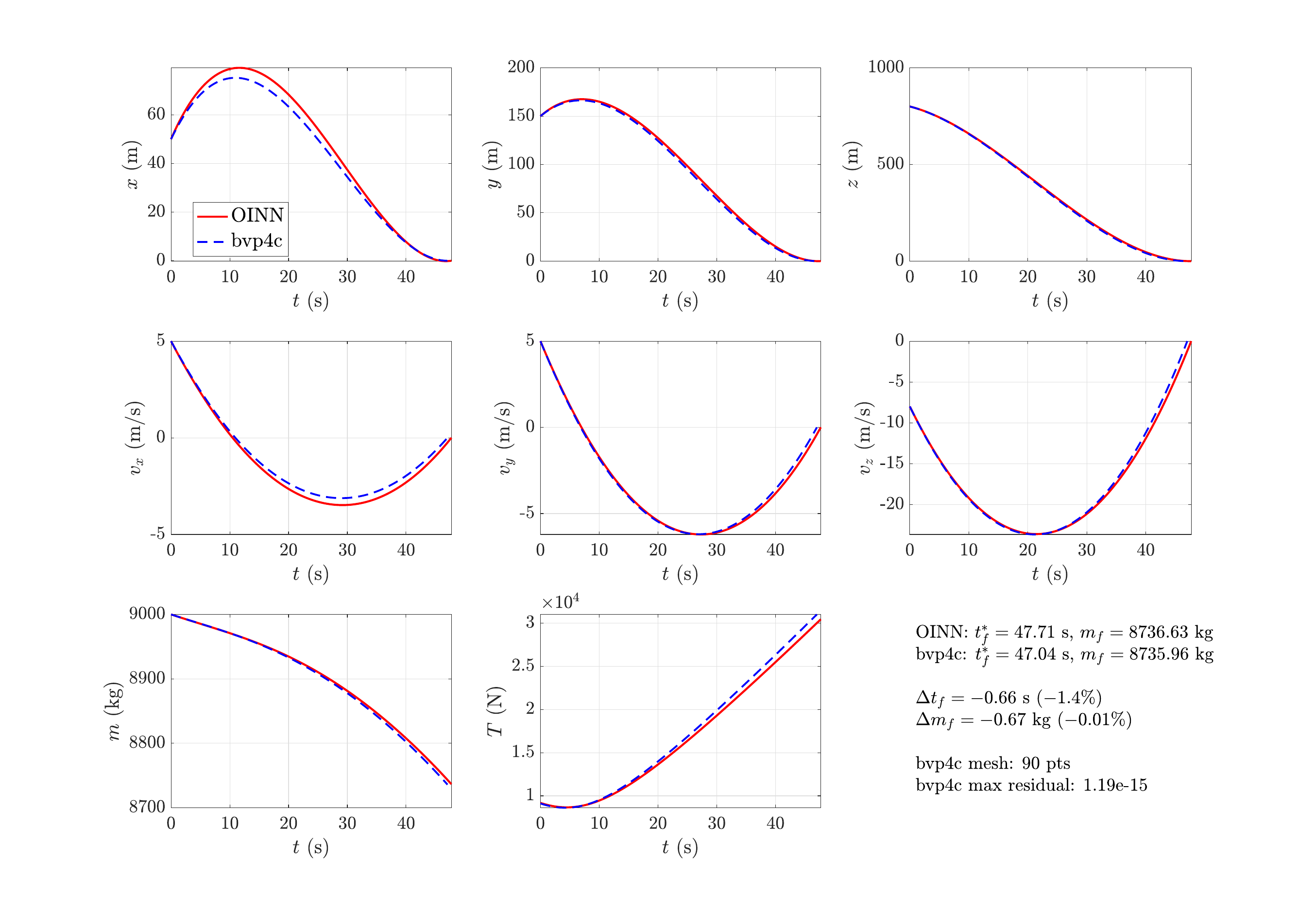}
\caption{OINN solution vs bvp4c indirect solution for T2 (light + near/slow).}
\label{fig:t2}
\end{figure*}

\begin{figure*}
\centering
\includegraphics[width=2\columnwidth]{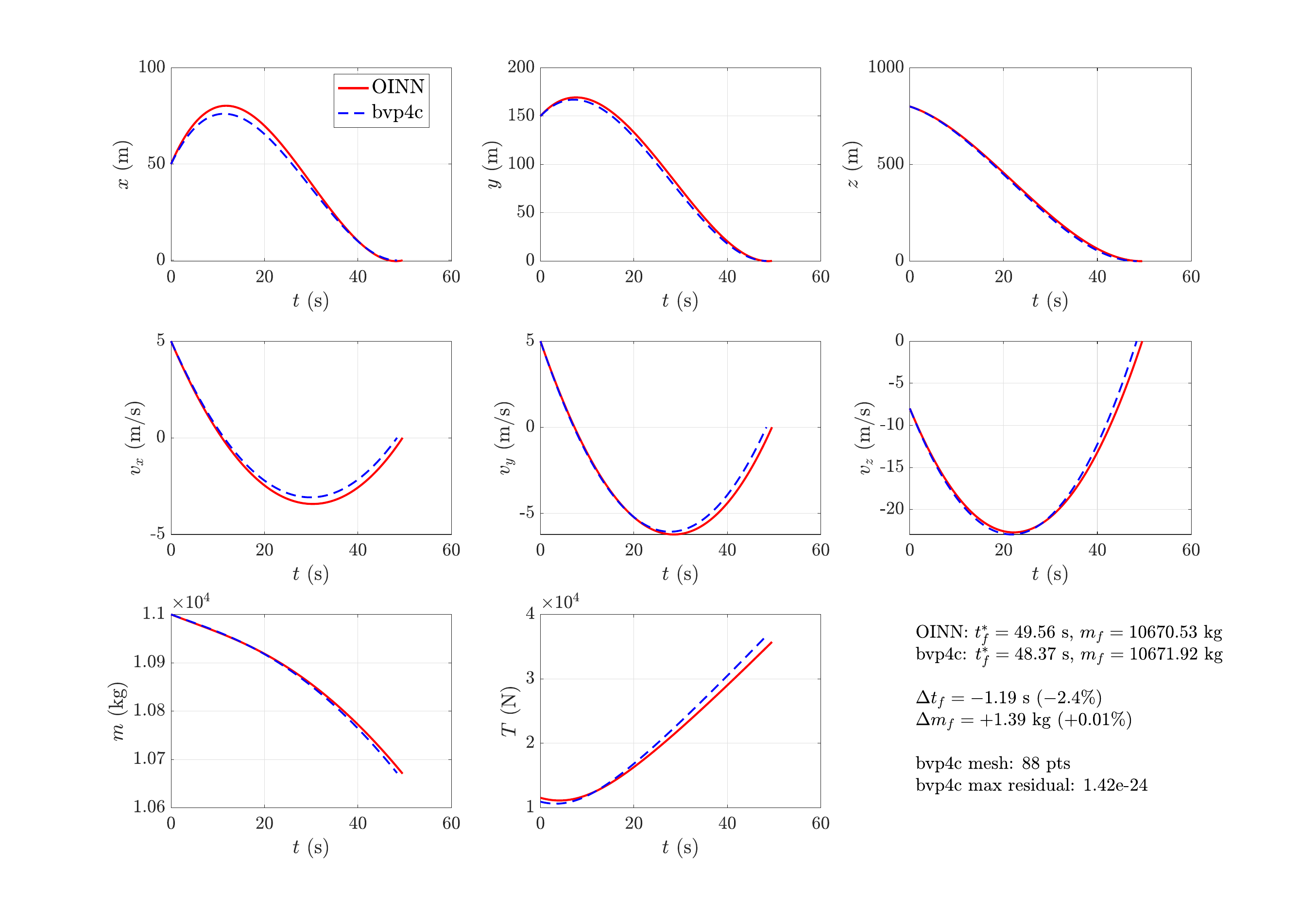}
\caption{OINN solution vs bvp4c indirect solution for T3 (heavy + near/slow).}
\label{fig:t3}
\end{figure*}

\begin{figure*}
\centering
\includegraphics[width=2\columnwidth]{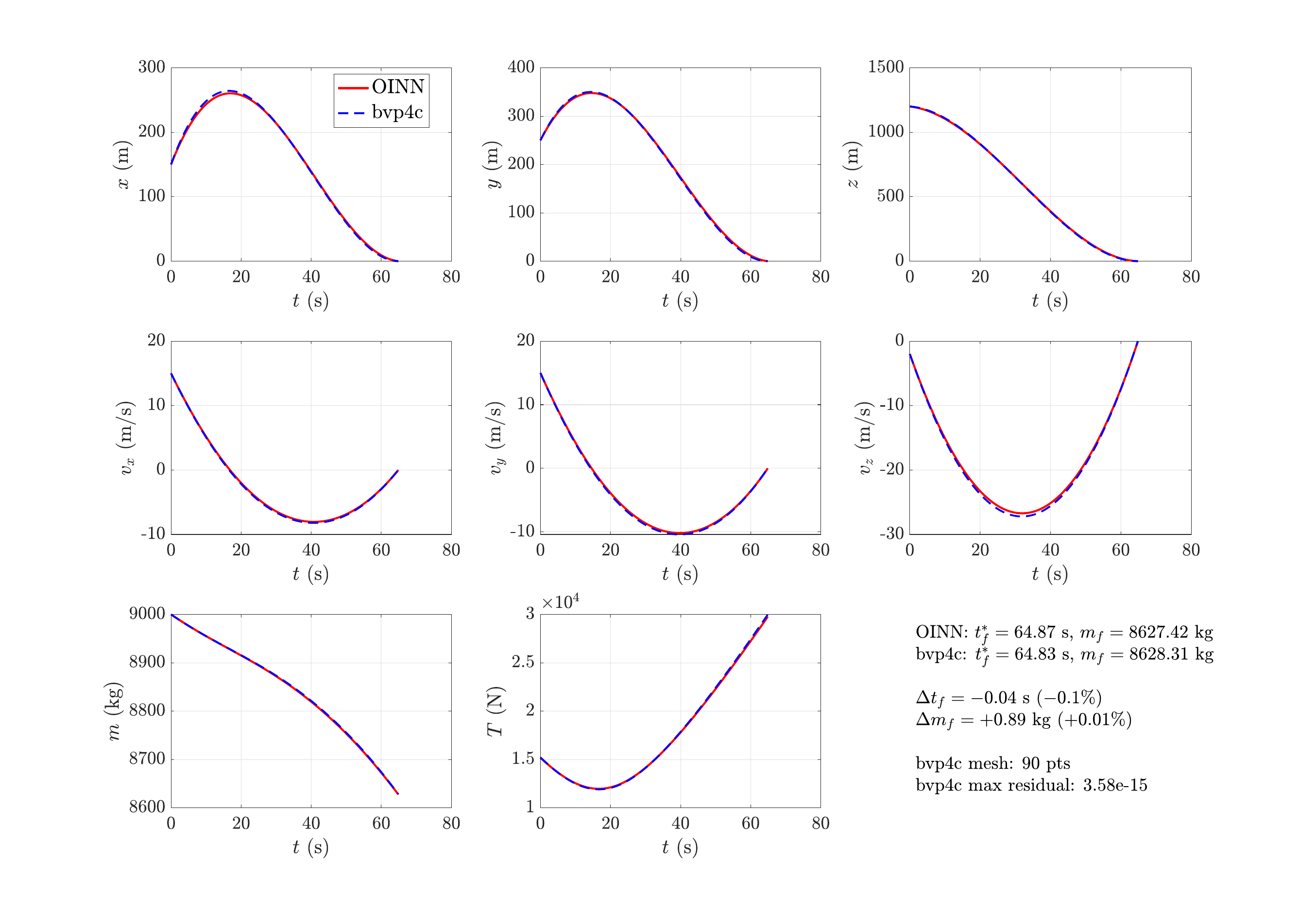}
\caption{OINN solution vs bvp4c indirect solution for T4 (light + far/fast).}
\label{fig:t4}
\end{figure*}

\begin{figure*}
\centering
\includegraphics[width=2\columnwidth]{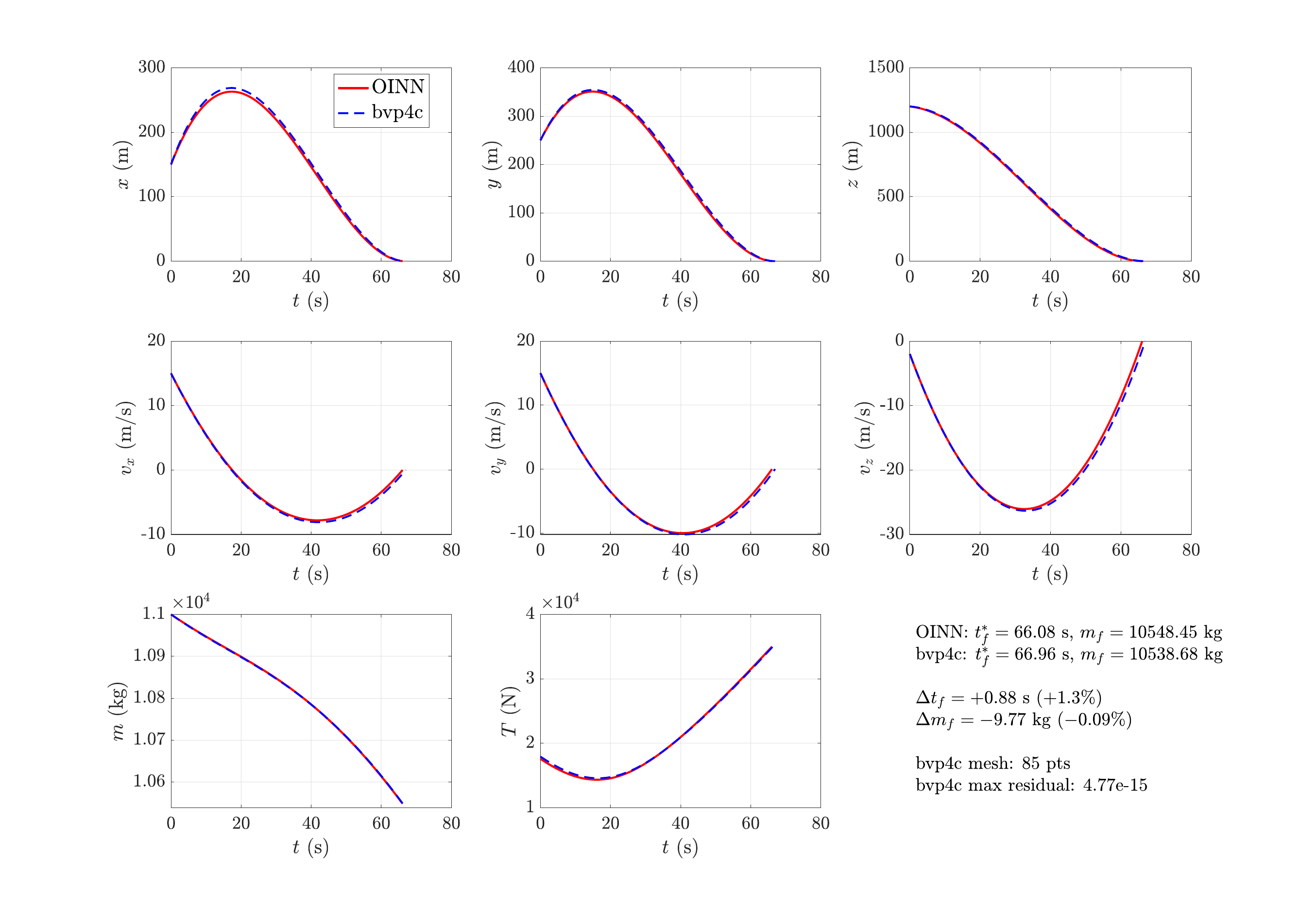}
\caption{OINN solution vs bvp4c indirect solution for T5 (heavy + far/fast).}
\label{fig:t5}
\end{figure*}

\begin{figure*}
\centering
\includegraphics[width=2\columnwidth]{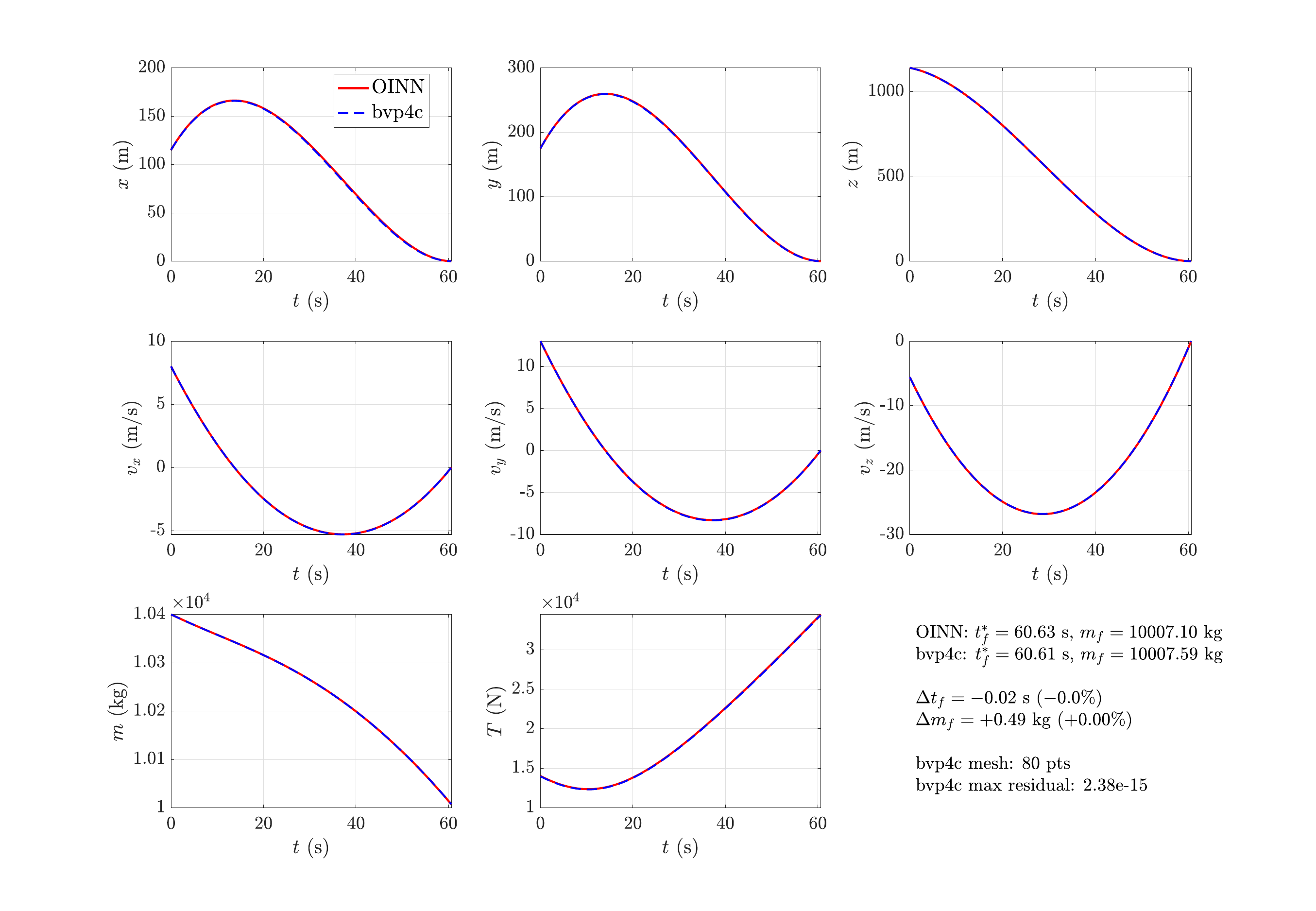}
\caption{OINN solution vs bvp4c indirect solution for T6 (interior off-center point).}
\label{fig:t6}
\end{figure*}

Across all six cases, the agreement between the trained network and the independently solved boundary-value problem is close. The predicted flight time differs by at most $1.19\ \mathrm{s}$, or $2.4\,\%$, at T3, and the predicted final mass differs by at most about ten kilograms out of several thousand, under one tenth of one percent, at T5. The \texttt{bvp4c} solver itself converges to a tight numerical tolerance at every point, with a maximum boundary-condition residual no larger than about $5\times10^{-15}$, and at one point as small as $1.4\times10^{-24}$, on a mesh of under one hundred points in every case, confirming that the indirect-method solution is itself an accurate solution of the necessary conditions of Section~\ref{sec:optimality_conditions} and therefore a meaningful independent check on the network. The differences observed are not one-sided. At some points the network predicts a slightly higher final mass than \texttt{bvp4c}, and at others a slightly lower one, consistent with both being independent, approximately converged representations of the same underlying extremal rather than one being a biased approximation of the other. These differences are exactly the empirical residual magnitudes $\varepsilon_{x}^{\infty}$ and $\varepsilon_{H}^{\infty}$ that Theorem~\ref{thm:gronwall} and Remark~\ref{rem:tf_sensitivity} translate into touchdown-error and flight-time-error bounds. The sub-percent final-mass agreement observed here is the realization, at these six points, of the small touchdown error that a small dynamics residual was shown in Section~\ref{subsec:accuracy} to guarantee, and the largest flight-time discrepancy, at T3, is consistent with a correspondingly small but nonzero free-final-time transversality residual $\varepsilon_{H}^{\infty}$ at that point.

Figures~\ref{fig:traj}--\ref{fig:training} summarize the same trained network's behavior across all six points together. Figure~\ref{fig:traj} overlays the six commanded three-dimensional trajectories, each starting from its own initial position and converging to the common target at the origin. The visibly different path shapes and lengths are produced entirely by the condition encoder's per-initial-state affine velocity-costate parameters and predicted final time, with no change to the network weights between points. Figure~\ref{fig:thrust} overlays the corresponding thrust-magnitude histories, each ending at its own predicted final time $t_{f}^{\ast}$, and Figure~\ref{fig:mass} overlays the corresponding mass-depletion histories, each starting from its own initial mass $m_{0}$ and ending at its own predicted final mass. Consistent with the energy-optimal, free-final-time formulation of Section~\ref{sec:problem_formulation}, every thrust profile in Figure~\ref{fig:thrust} is lowest in the early-to-middle portion of the flight and rises toward the end as the vehicle decelerates into the fixed terminal velocity, with the heaviest, farthest, fastest case, T5, requiring both the longest flight time and among the largest thrust magnitudes of the six. Figure~\ref{fig:training} shows the training-loss history on a logarithmic scale for the single run that produced the network evaluated in Figures~\ref{fig:t1}--\ref{fig:mass}. The total residual loss falls by several orders of magnitude within the first few thousand iterations and then settles into a noisy plateau for the remainder of the $K=25{,}000$ iterations, exactly the behavior anticipated by Theorem~\ref{thm:convergence}, whose bound~\eqref{eq:convergence_rate} guarantees convergence only to a neighborhood of a stationary point of a radius controlled by the learning rate $\eta$ and the stochastic-gradient variance of Assumption~\ref{asmp:gradient_noise}, rather than to an ever-decreasing value. The plateau visible in Figure~\ref{fig:training} is the expected signature of having reached that neighborhood, rather than indicating a failure to train further.

\begin{figure}[H]
\centering
\includegraphics[width=\columnwidth]{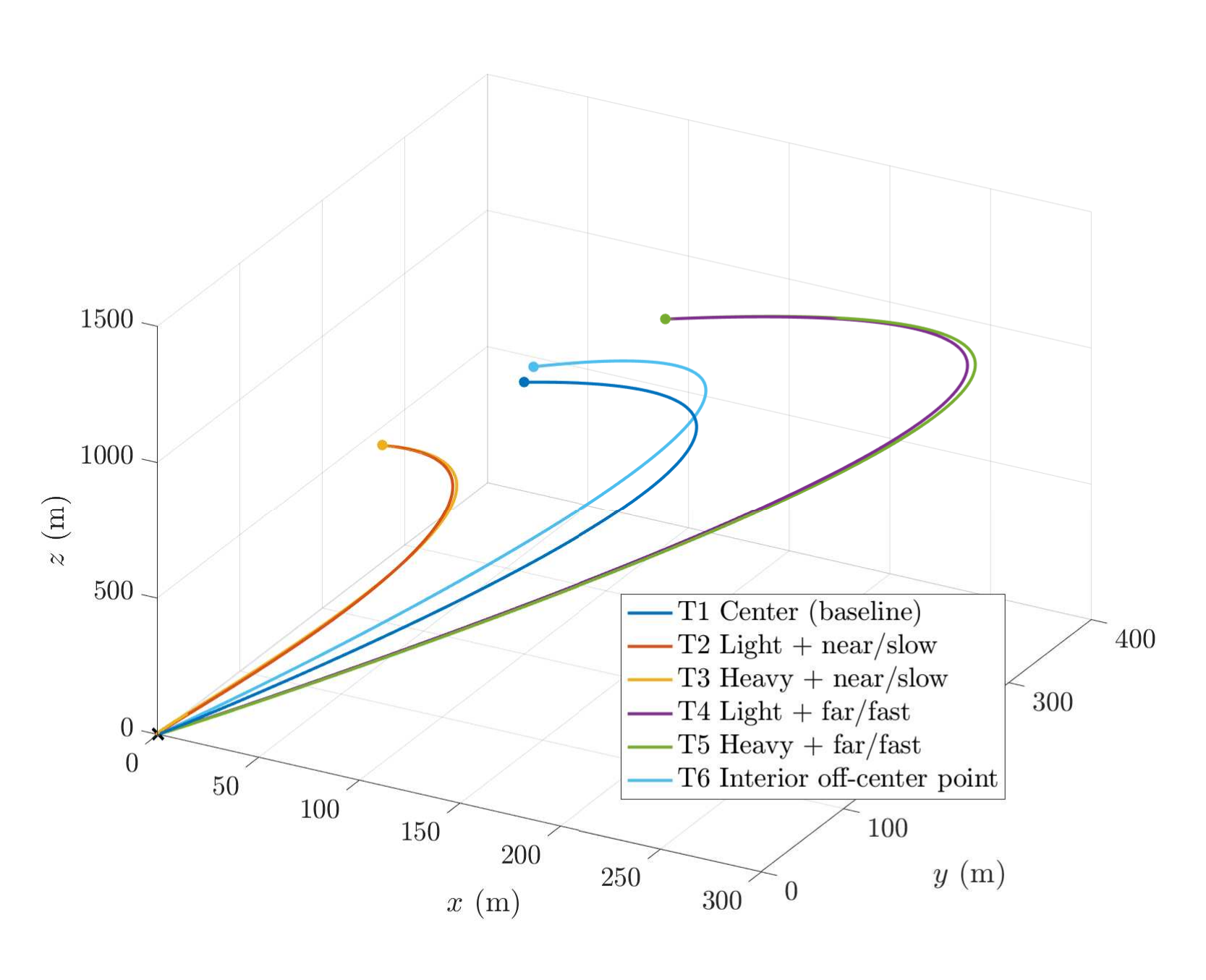}\\
\caption{Optimal three-dimensional trajectories (same network, different initial states.}
\label{fig:traj}
\end{figure}

\begin{figure}[H]
\centering
\includegraphics[width=\columnwidth]{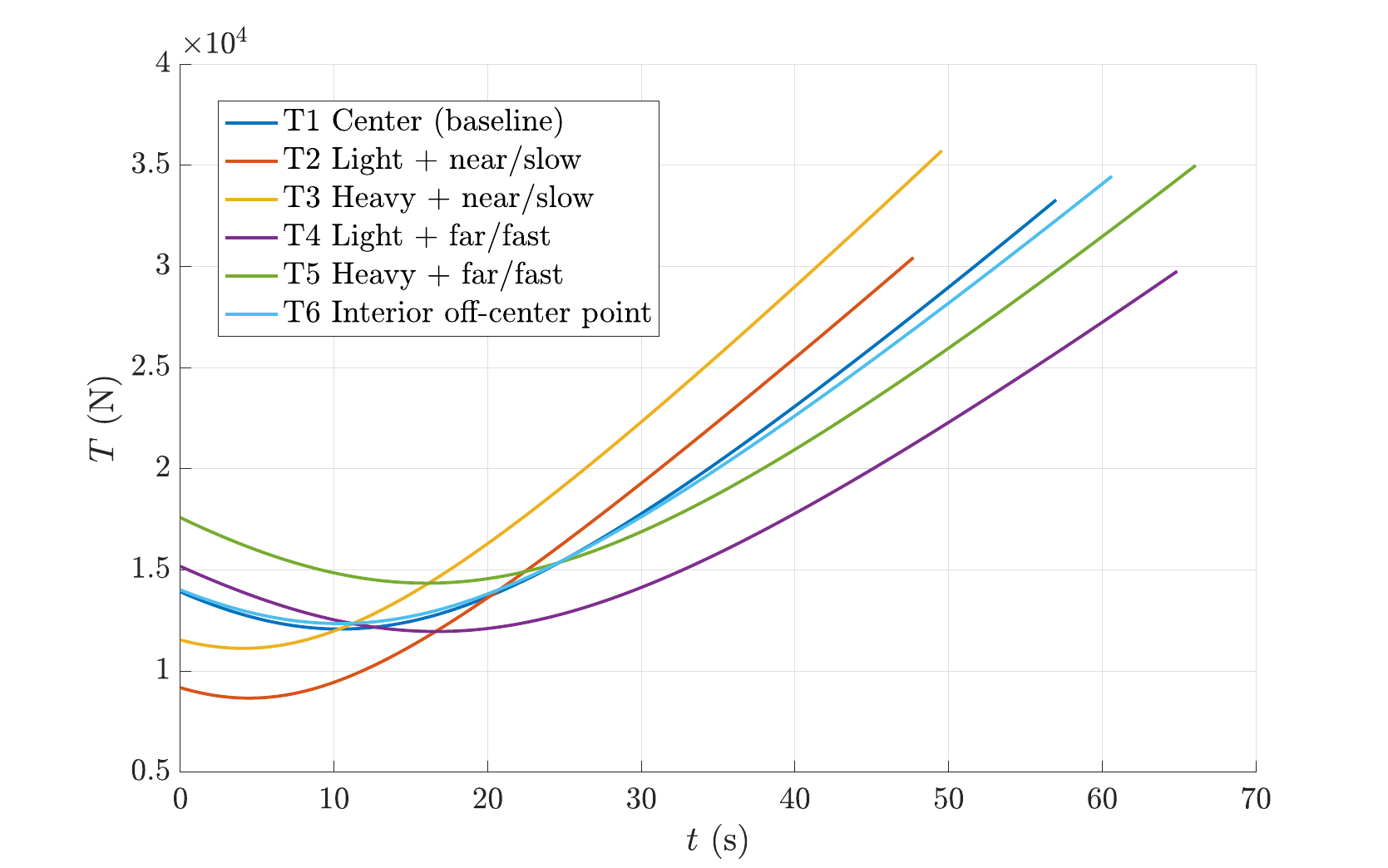}\\
\caption{Optimal thrust magnitude profiles (same network, different initial states, different $t_f^*$).}
\label{fig:thrust}
\end{figure}

\begin{figure}[H]
\centering
\includegraphics[width=\columnwidth]{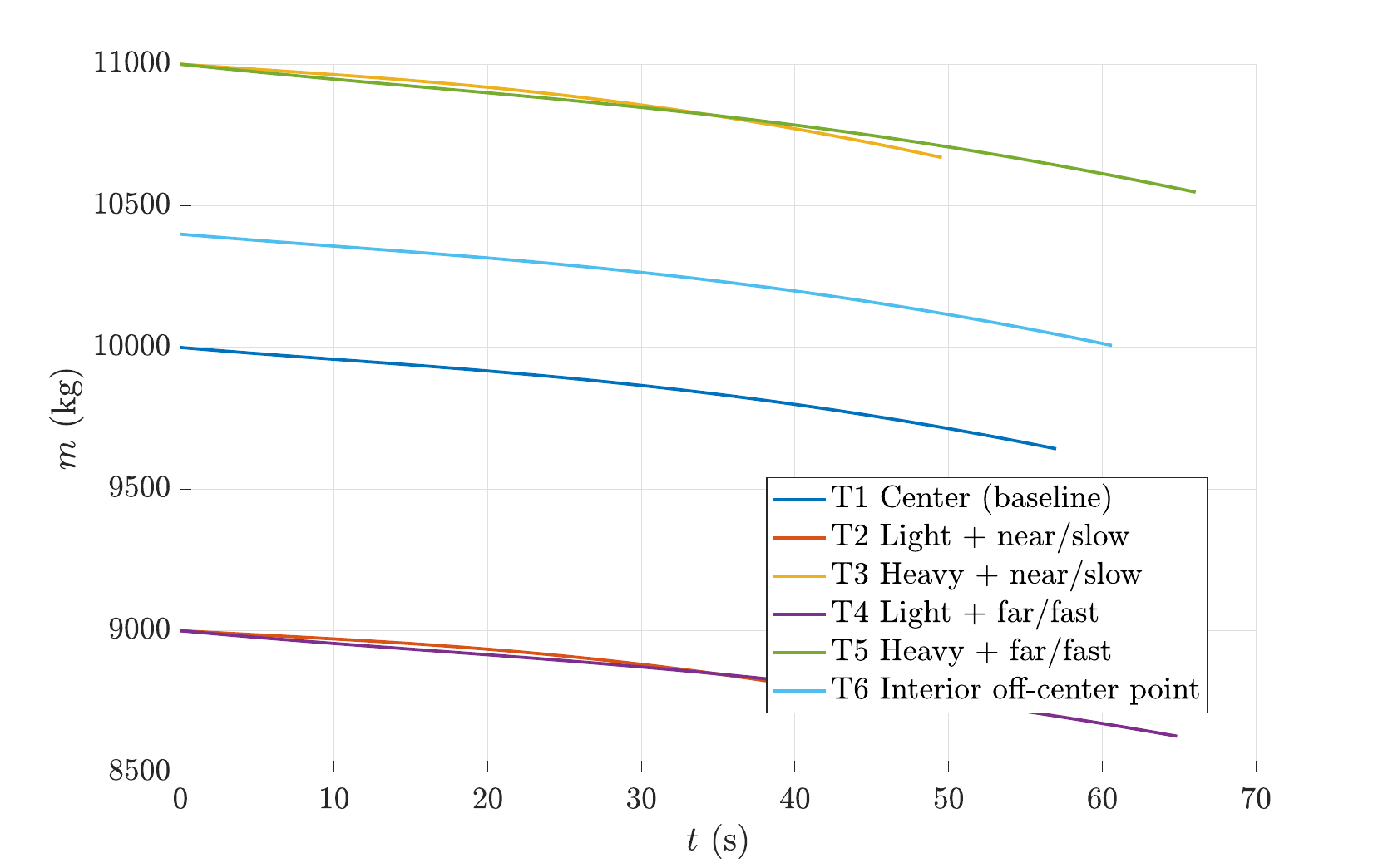}\\
\caption{Optimal mass profiles (same network, different $m_0$, different $t_f^*$).}
\label{fig:mass}
\end{figure}

\begin{figure}[H]
\centering
\includegraphics[width=\columnwidth]{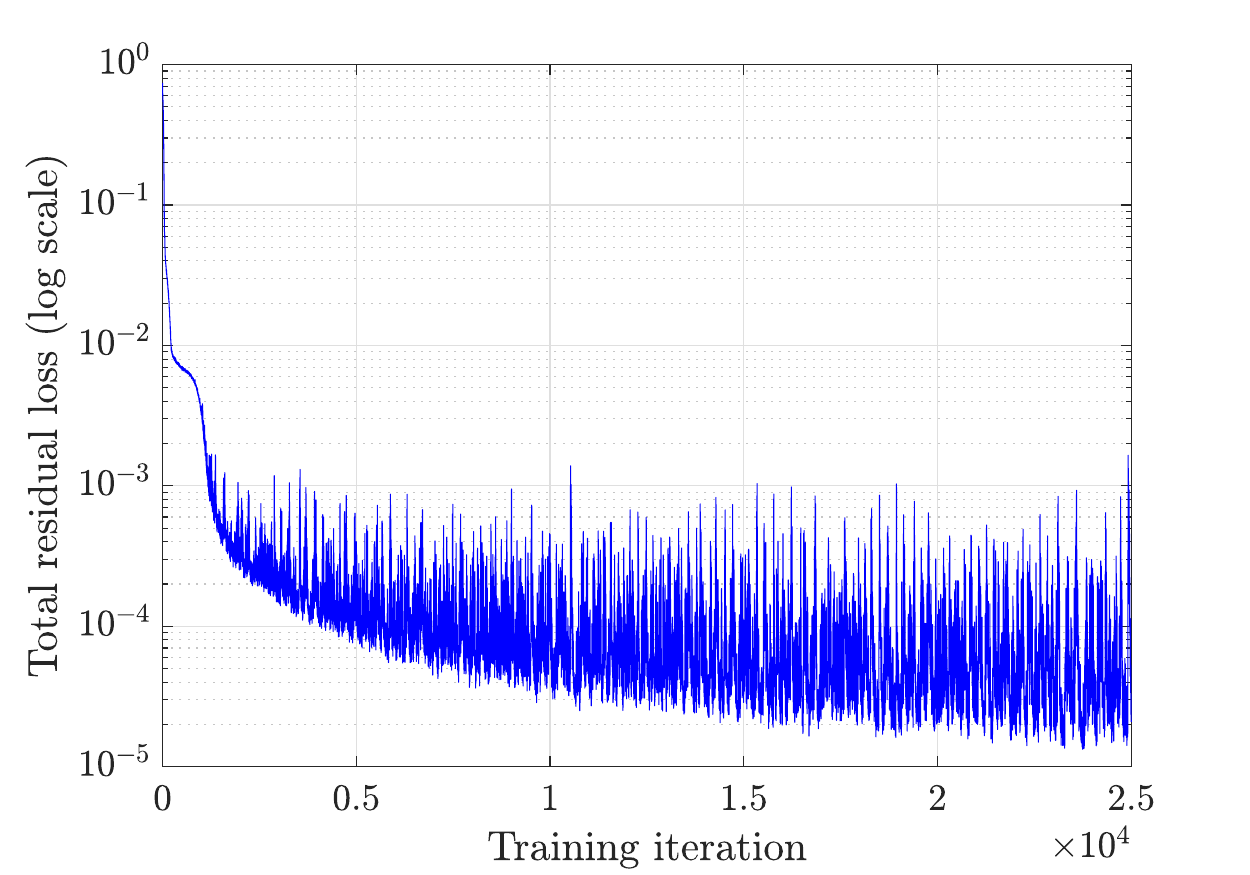}\\
\caption{Training convergence.}
\label{fig:training}
\end{figure}

\subsection{Zero-Shot Generalization via Monte Carlo Evaluation}
\label{subsec:monte_carlo}

The six cases investigated above establish generalization in one sense: a single trained network, evaluated with no retraining, reproduces an independently solved optimal solution at the center, every corner, and one interior point of $\Omega$. A complementary, statistically broader sense of generalization is also of interest for onboard use, which is regarding how the same frozen network behaves across a large number of initial states drawn at random from the same operating envelope, rather than at only a handful of named points chosen in advance. To assess this, the trained network is evaluated, again with no retraining, at eighty initial states $(\bm{r}_{0}^{(i)},\bm{v}_{0}^{(i)},m_{0}^{(i)})$ drawn uniformly at random from $\Omega$, using a random-number-generator seed different from the one used during training, so that these eighty draws constitute genuinely out-of-sample evaluation points rather than a re-sampling of the training distribution. For each draw, the same diagnostics computed at the six named points, the predicted flight time, the final mass and propellant fraction used, the peak commanded thrust, the dynamics residuals~\eqref{eq:res_r}--\eqref{eq:res_m}, and the free-final-time transversality residual~\eqref{eq:res_H}, are recorded.

Figure~\ref{fig:monte_carlo_1} shows, as box-and-strip plots over the eighty draws, the distribution of the predicted flight time $t_{f}^{\ast}$ and of the fraction of available propellant, $(m_{0}-m(t_{f}^{\ast}))/(m_{0}-m_{\mathrm{dry}})$, consumed by each trajectory. Both distributions are tightly clustered, with the flight times spanning a range consistent with the $47.7$ to $66.1\ \mathrm{s}$ range already observed across the six named points, and with no draw approaching either the minimum-flight-time safeguard $t_{f}^{\min}$ or an excessively long flight, indicating that the predicted final time~\eqref{eq:tf_softplus} responds sensibly to whichever initial state is supplied. Figure~\ref{fig:monte_carlo_2} shows the corresponding distributions of the position-, velocity-, and mass-dynamics residuals, the empirical realizations of the components of $\bm{\varepsilon}_{x}(t)$ in Theorem~\ref{thm:gronwall}. All three remain small and comparably sized across the full set of draws, with no outlier draw exhibiting a qualitatively larger residual than the rest, supporting the use of a single representative value of $\varepsilon_{x}^{\infty}$ when applying the touchdown-error bound~\eqref{eq:gronwall_bound} across the whole operating envelope rather than only at the point where it happens to be evaluated. Figure~\ref{fig:monte_carlo_3} shows the distribution of the peak commanded thrust against the engine ceiling $T_{\max}=44{,}000\ \mathrm{N}$, drawn as a reference line, together with the distribution of the maximum free-final-time transversality residual $\varepsilon_{H}^{\infty}$ of Remark~\ref{rem:tf_sensitivity}. The peak thrust remains comfortably below the ceiling for the large majority of draws, with only a small fraction approaching saturation, and the transversality residual remains small and similarly distributed across draws, consistent with the small flight-time discrepancies already observed at the six named points and with the sensitivity bound~\eqref{eq:tf_sensitivity} translating that small residual into a correspondingly small flight-time error throughout $\Omega$, not merely at the points checked individually against \texttt{bvp4c}.

\begin{figure}[H]
\centering
\includegraphics[width=\columnwidth]{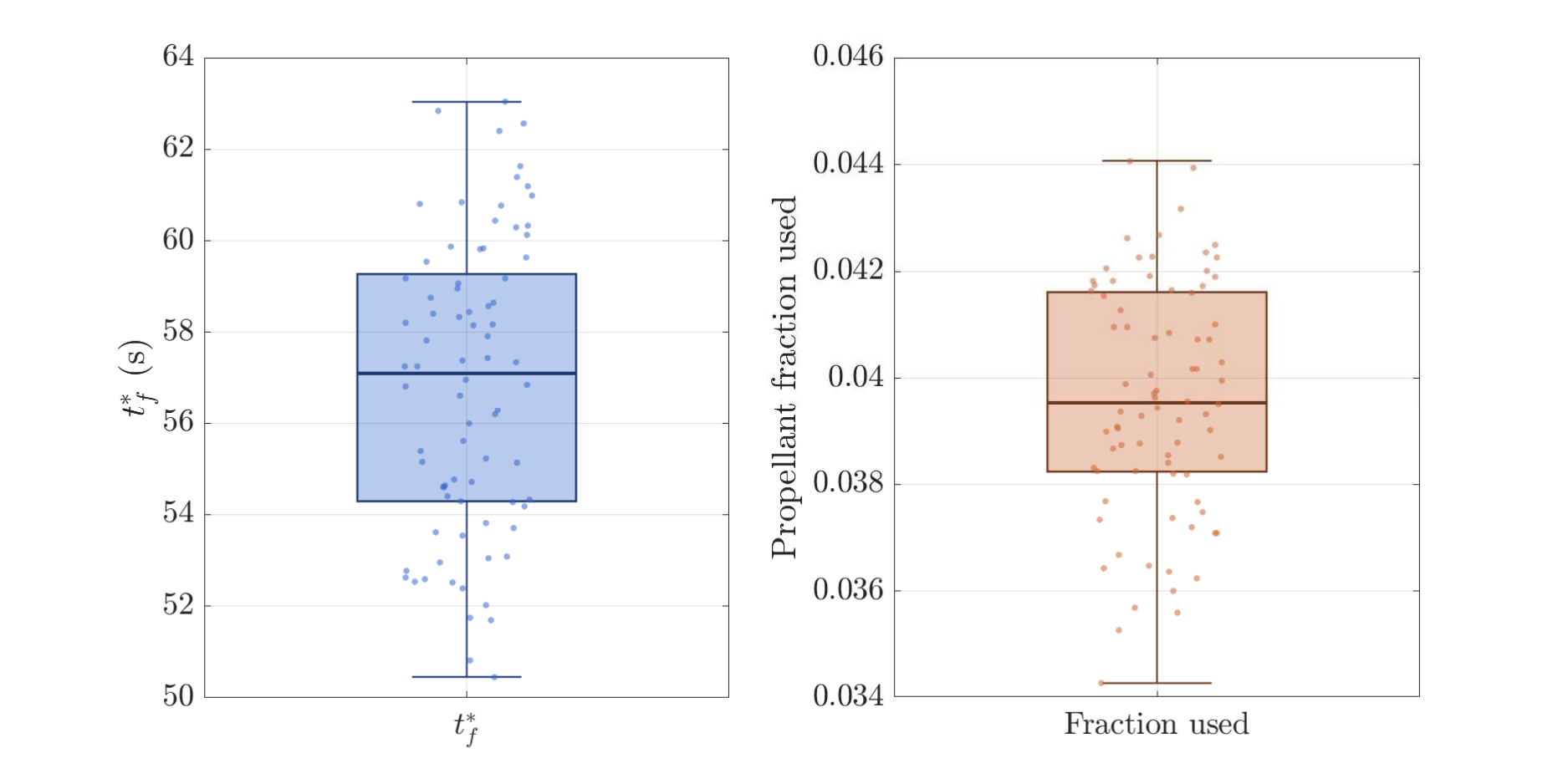}\\
\caption{Monte Carlo: predicted flight time and propellant usage.}
\label{fig:monte_carlo_1}
\end{figure}

\begin{figure}[H]
\centering
\includegraphics[width=\columnwidth]{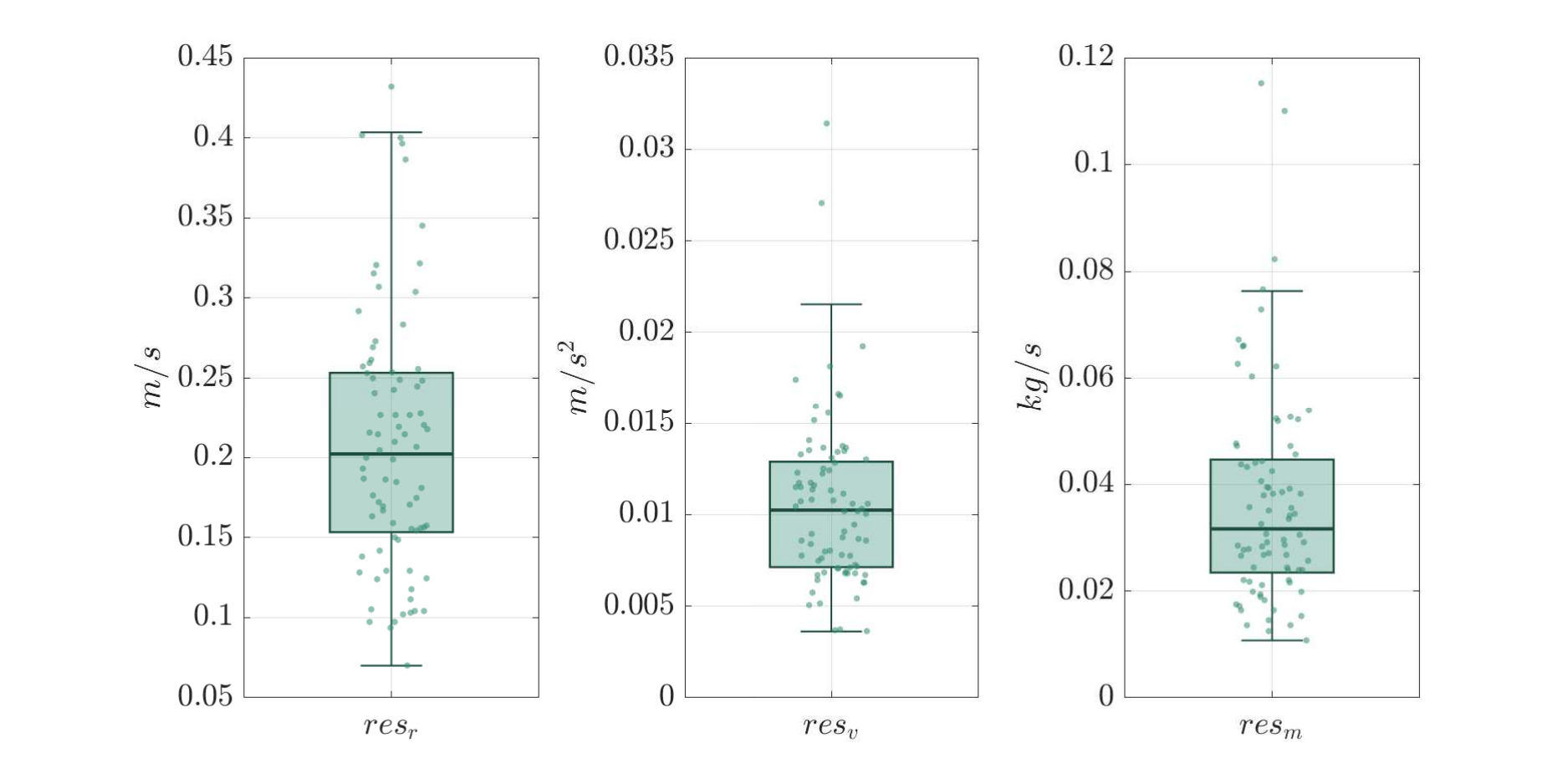}\\
\caption{Monte Carlo: position-dynamics, velocity-dynamics, and mass-dynamics residuals.}
\label{fig:monte_carlo_2}
\end{figure}

\begin{figure}[H]
\centering
\includegraphics[width=\columnwidth]{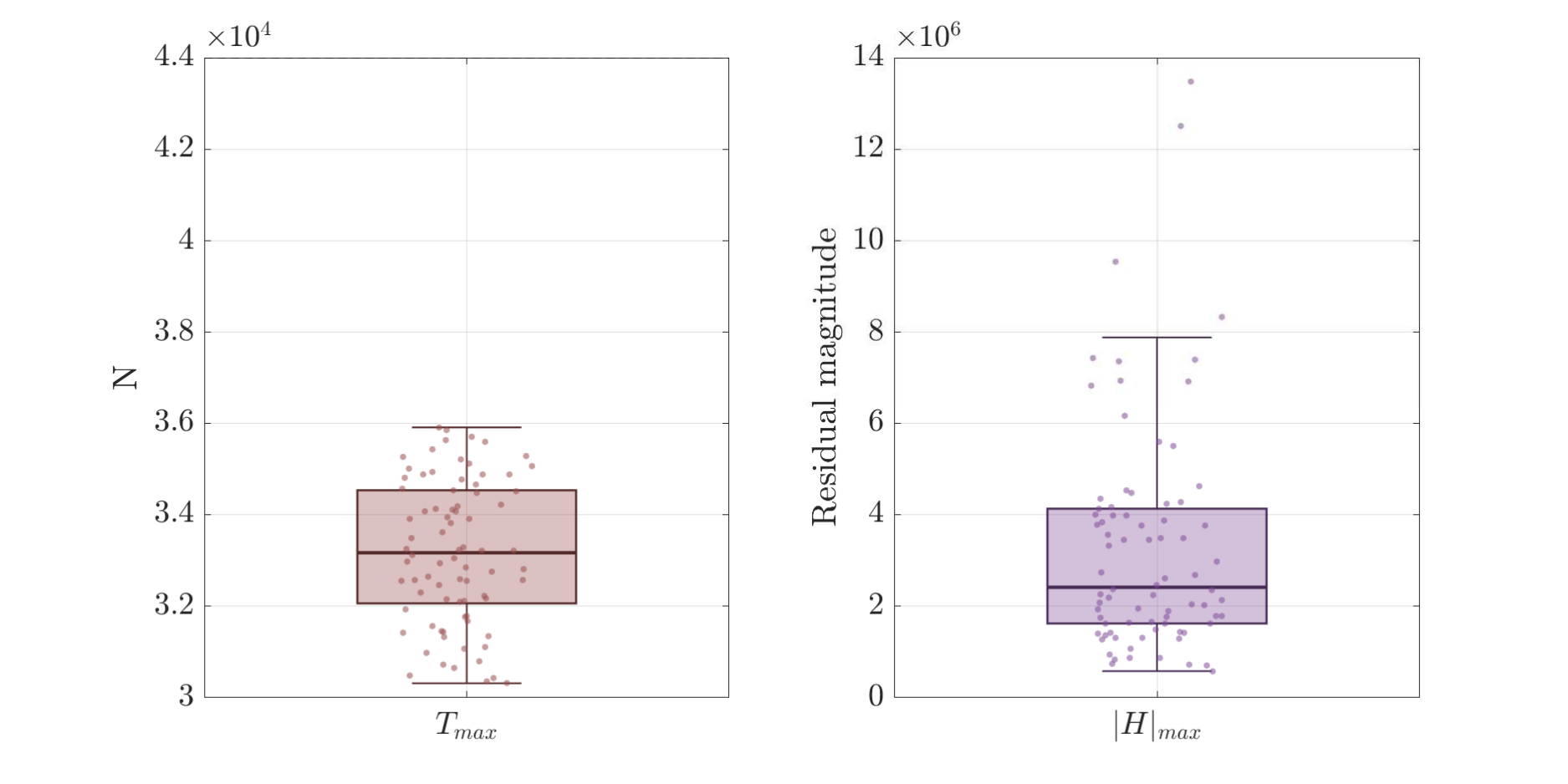}\\
\caption{Monte Carlo: thrust range and Hamiltonian check.}
\label{fig:monte_carlo_3}
\end{figure}

Together, the six-point comparison against the independently solved indirect method and the eighty-point Monte Carlo evaluation give complementary evidence for the theoretical claims in Section~\ref{sec:theoretical_analysis}. The trained network, evaluated zero-shot and without retraining anywhere in $\Omega$, reproduces an independently computed numerical solution of the necessary conditions of optimality closely at six representative points, including the hardest corner of the operating envelope, and maintains comparably small dynamics and transversality residuals across eighty additional, randomly drawn points spanning the same envelope. By the bounds of Theorem~\ref{thm:gronwall} and Remark~\ref{rem:tf_sensitivity}, these small, consistently sized residuals translate directly into small, bounded touchdown-position, touchdown-velocity, and flight-time errors throughout $\Omega$, while the per-update computational and memory cost of evaluating the policy, quantified in Section~\ref{subsec:onboard_cost}, remains the same small, fixed value at every one of these points.

\section{Conclusions}
\label{sec:conclusions}

This paper developed an Optimality-Informed Neural Network (OINN) approach for the energy-optimal, free-final-time powered descent of a lunar lander from any initial position, velocity, and mass within a bounded operating envelope to a fixed landing site. Specializing a recently proposed optimality-principles-informed learning framework to this free-final-time, fixed-terminal-boundary structure, the approach hard-encodes every boundary and transversality condition identified by Pontryagin's minimum principle directly into the network architecture, substitutes the closed-form optimal thrust and direction law rather than learning it, and trains the remaining state, costate, and auxiliary value-function outputs against a physics-residual loss built entirely from the necessary conditions of optimality, without requiring any precomputed optimal trajectories. A companion theoretical analysis showed that the offline training procedure converges, in a stationarity sense standard for nonconvex stochastic optimization, to a neighborhood of a point satisfying these conditions; that several of the most safety-critical conditions, including the initial and terminal boundary conditions and the control admissibility constraints, hold exactly regardless of training quality; that the achieved training residual translates, through an explicit Gr\"{o}nwall-type bound, into a computable bound on touchdown position, touchdown velocity, and flight-time error; and that the trained policy requires only a fixed, small number of floating-point operations and a memory footprint of a few tens of kilobytes to evaluate online, independent of the specific initial state being flown.
Numerical simulations of a representative lunar landing scenario supported these theoretical properties. The trained policy, evaluated with no retraining, agreed closely with an independently solved indirect-method boundary-value-problem solution at six representative initial states spanning the operating envelope, with flight-time and final-mass differences of at most a few percent and a small fraction of one percent, respectively. An eighty-point Monte Carlo evaluation over the same operating envelope showed comparably small dynamics and transversality residuals throughout, with no evidence of thrust saturation or degraded behavior away from the six selected points, supporting the claim that a single offline-trained network, requiring no expert trajectory dataset and no per-instance online numerical optimization, can serve as a physically consistent, real-time-deployable guidance law for the entire family of powered-descent initial conditions considered.


\acknowledgements 
Generative AI tools were used for language refinement during the preparation of this paper. All technical content and interpretations were developed by the author.

\bibliographystyle{IEEEtran}
\bibliography{MyBibFile}

\thebiography
\begin{biographywithpic}
{Zhenbo Wang}{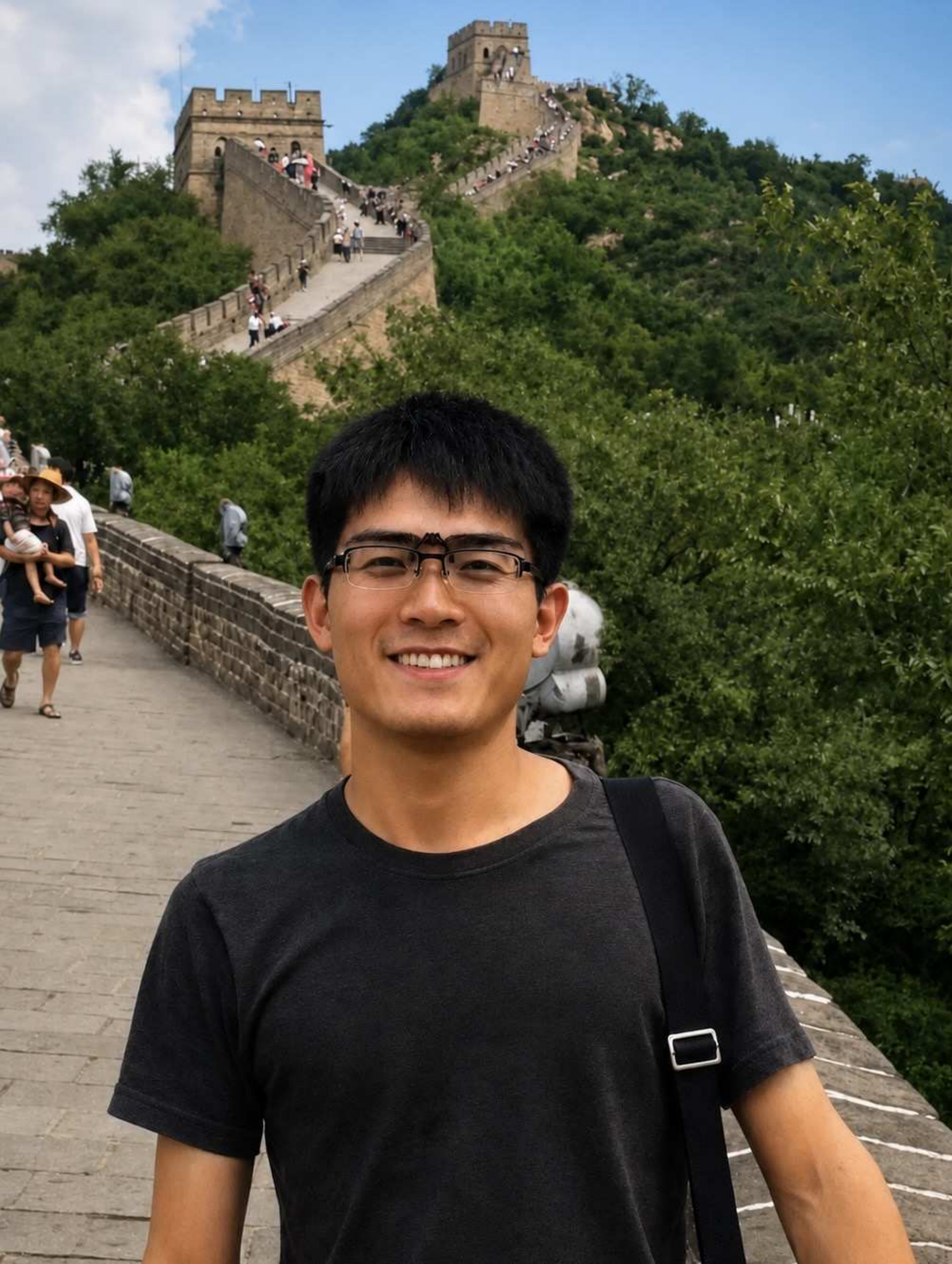} received his B.E. degree in Astronautics from Nanjing University of Aeronautics and Astronautics in 2010 and his M.E. degree in Control Engineering from Beihang University in 2013. In 2018, he received his Ph.D. degree in Aeronautics and Astronautics from Purdue University and joined the University of Tennessee Knoxville (UTK) as an Assistant Professor. He is now an Associate Professor in the Department of Mechanical and Aerospace Engineering and the director of the Autonomous Systems Laboratory at UTK. He is a recipient of the 2023 NSF Faculty Early Career Development Program (CAREER) Award, the 2023 Louis and Ann Hoffman Endowed Excellence in Research Award, and the 2024 Professional Promise in Research Award. His research interests are control, optimization, and machine learning for various engineering applications including space systems, air vehicles, connected and automated vehicles, and power and energy systems. He is a Senior Member of the American Institute of Aeronautics and Astronautics (AIAA) and a member of the AIAA Atmospheric Flight Mechanics (AFM) Technical Committee. He is a Senior Editor of \textit{IEEE Transactions on Aerospace and Electronic Systems}.
\end{biographywithpic}

\end{document}